\newtheorem{thm}{Theorem}[section]
\newtheorem{cor}[thm]{Corollary}
\newtheorem{lem}[thm]{Lemma}
\newtheorem{prop}[thm]{Proposition}
\newtheorem{prob}[thm]{Problem}
\theoremstyle{definition}
\newtheorem{defn}[thm]{Definition}
\theoremstyle{remark}
\newcommand{\ball}[1]{\ensuremath{B_{#1}}}
\newcommand{\bidual}[1]{\ensuremath{{#1}^{**}}}
\newcommand{\cderiv}[2]{\ensuremath{#1}^{({#2})}}
\newcommand{\Ck}[1]{\ensuremath{\mathscr{C}({#1})}}
\newcommand{\closure}[1]{\ensuremath{\overline{{#1}}}}
\newcommand{\Czerok}[1]{\ensuremath{\mathscr{C}_0({#1})}}
\newcommand{\dual}[1]{\ensuremath{{#1}^*}}
\newcommand{\lint}[4]{\ensuremath{\int_{#1}^{#2}{#3}\:\mathrm{d}{#4}}}
\newcommand{\mapping}[3]{\ensuremath{{#1}:{#2}\longrightarrow{#3}}}
\newcommand{\moddot}{\ensuremath{|\cdot|}}
\newcommand{\nat}{\mathbb{N}}
\newcommand{\norm}[1]{\ensuremath{||{#1}||}}
\newcommand{\normdot}{\ensuremath{||\cdot||}}
\newcommand{\oneton}[2]{\ensuremath{{#1}_1,\ldots,{#1}_{#2}}}
\newcommand{\pnorm}[2]{\ensuremath{||{#1}||_{#2}}}
\newcommand{\pnormdot}[1]{\ensuremath{||\cdot||_{#1}}}
\newcommand{\ptrinorm}[2]{\ensuremath{|||{#1}|||_{#2}}}
\newcommand{\ptrinormdot}[1]{\ensuremath{|||\cdot|||_{#1}}}
\newcommand{\rat}{\mathbb{Q}}
\newcommand{\real}{\mathbb{R}}
\newcommand{\res}[2]{\ensuremath{\res({#1};{#2})}}
\newcommand{\restrict}[1]{\ensuremath{\!\!\upharpoonright_{#1}}}
\newcommand{\setcomp}[2]{\ensuremath{\{{#1}\;|\;\,{#2}\}}}
\newcommand{\sph}[1]{\ensuremath{S_{#1}}}
\newcommand{\trinorm}[1]{\ensuremath{|||{#1}|||}}
\newcommand{\trinormdot}{\ensuremath{|||\cdot|||}}
\newcommand{\weakstar}{\ensuremath{w^*}}
\newcommand{\wone}{\ensuremath{\omega_1}}
\DeclareMathOperator{\aspan}{span} %
\DeclareMathOperator{\card}{card} %
\begin{document}

\title{Gruenhage compacta and strictly convex dual norms}
\author{Richard J.\ Smith}
\address{Queens' College, Cambridge, CB3 9ET, United Kingdom}
\email{rjs209@cam.ac.uk}
\thanks{Some of this research was conducted during a visit to the
University of Valencia, Spain. The author is grateful to A.\
Molt\'{o} for the invitation and subsequent discussions. He also
wishes to thank S.\ Troyanski and V.\ Montesinos for interesting
discussions and remarks.}

\subjclass[2000]{Primary 46B03; Secondary 46B26}

\date{October 2007}

\keywords{Gruenhage space, rotund, strictly convex, norm, tree,
renorming theory, perfect image, continuous image}


\begin{abstract}
We prove that if $K$ is a Gruenhage compact space then
$\dual{\Ck{K}}$ admits an equivalent, strictly convex dual norm. As
a corollary, we show that if $X$ is a Banach space and $\dual{X} =
\closure{\aspan}^{\trinormdot}(K)$, where $K$ is a Gruenhage compact
in the $\weakstar$-topology and $\trinormdot$ is equivalent to a
coarser, $\weakstar$-lower semicontinuous norm on $\dual{X}$, then
$\dual{X}$ admits an equivalent, strictly convex dual norm. We give
a partial converse to the first result by showing that if $\Upsilon$
is a tree, then $\dual{\Czerok{\Upsilon}}$ admits an equivalent,
strictly convex dual norm if and only if $\Upsilon$ is a Gruenhage
space. Finally, we present some stability properties satisfied by
Gruenhage spaces; in particular, Gruenhage spaces are stable under
perfect images.
\end{abstract}

\maketitle

\section{Introduction and preliminaries}

In renorming theory, we determine the extent to which the norm of a
given Banach space can be modified, in order to improve the geometry
of the corresponding unit ball. Naturally, the structural theory of
Banach spaces plays an important part in this field but, in recent
times, there has been a move toward a more non-linear, topological
approach. This new outlook led to the solution of some long-standing
problems, as well as producing some completely unexpected results.

Recall that a norm $\normdot$ on a real Banach space $X$ is called
\textit{strictly convex}, or \textit{rotund}, if $\norm{x} =
\norm{y} = \frac{1}{2}\norm{x+y}$ implies $x = y$. We say that
$\normdot$ is \textit{locally uniformly rotund}, or \textit{LUR},
if, given a point $x$ and a sequence $(x_n)$ in the unit sphere
$\sph{X}$ satisfying $\norm{x + x_n} \rightarrow 2$, we have $x_n
\rightarrow x$ in norm. If $\normdot$ is a dual norm on $\dual{X}$
then $\normdot$ is called $\weakstar$\textit{-LUR} if, given $x$ and
$(x_n)$ as above, we have $x_n \rightarrow x$ in the
$\weakstar$-topology. For a dual norm, evidently LUR $\Rightarrow$
$\weakstar$-LUR $\Rightarrow$ strictly convex.

It turns out that, in some contexts, these ostensibly convex,
geometrical properties of the norm can be characterised relatively
simply in purely non-linear, topological terms. Given a compact,
Hausdorff space $K$, we denote the Banach space of continuous
real-valued functions on $K$ by $\Ck{K}$, and identify
$\dual{\Ck{K}}$ with the space of regular, signed Borel measures on
$K$. Raja proved that if $K$ is a compact space then $\dual{\Ck{K}}$
admits an equivalent, dual LUR norm if and only if $K$ is
$\sigma$\textit{-discrete} \cite{raja:02}; that is, $K$ is a
countable union of sets, each of which is discrete in its subspace
topology. Moreover, $\dual{\Ck{K}}$ admits an equivalent
$\weakstar$-LUR norm if and only if $K$ is descriptive
\cite{raja:03}; the definition of a descriptive compact space is
given below. Raja also proved that $\dual{X}$ admits an equivalent
$\weakstar$-LUR norm if and only if $\ball{\dual{X}}$ is descriptive
in the $\weakstar$-topology.

Regarding strictly convex norms, the authors of \cite{motz:06}
recently showed that $X$, which can be a dual space, admits an
equivalent, strictly convex, $\sigma(X,N)$-lower semicontinuous norm
if and only if the square $\ball{X}^2$ has a certain linear,
topological decomposition with respect to a given norming subspace
$N \subseteq \dual{X}$. In this paper, we examine what can be done
without the linearity, and without explicit reference to the square.
Using Gruenhage compacta, we obtain a sufficient condition for a
dual space $\dual{X}$ to admit an equivalent, strictly convex dual
norm. This condition covers all established classes of Banach space
known to be so renormable, including the duals of all \textit{weakly
countably determined}, or Va\v{s}\'{a}k, spaces. It also covers the
more general class of `descriptively generated' dual spaces,
introduced recently in \cite{or:04}.

We define descriptive compact spaces and related notions. All
topological spaces are assumed to be Hausdorff. A family of subsets
$\mathscr{H}$ of a topological space $X$ is called \textit{isolated}
if, given $H \in \mathscr{H}$, there exists an open set $U$ that
includes $H$ and misses every other element of $\mathscr{H}$; i.e.\
$\mathscr{H}$ is discrete in the union $\bigcup \mathscr{H}$. The
family $\mathscr{H}$ is called a \textit{network} for $K$ if, given
$t \in U$, where $U$ is open, there exists $H \in \mathscr{H}$ such
that $t \in H \subseteq U$. In other words, a network is a basis,
but without the requirement that its elements be open subsets.
Finally, we say that a compact space $K$ is \textit{descriptive} if
it has a network $\mathscr{H}$ that is $\sigma$\textit{-isolated};
that is, $\mathscr{H} = \bigcup_n \mathscr{H}_n$, where each
$\mathscr{H}_n$ is a isolated family.

The class of descriptive compact spaces is rather large. It includes
two classes of topological spaces that have featured prominently in
non-separable Banach space theory, namely \textit{Eberlein} and
\textit{Gul'ko} compacta; see, for example \cite{fabian:97}. It also
includes all $\sigma$-discrete compact spaces; in particular, all
compact $K$ such that the Cantor derivative $\cderiv{K}{\wone}$ is
empty, where $\wone$ is the least uncountable ordinal. More
information about descriptive compact spaces can be found in
\cite{or:04}.

More generally, we say that a topological space $X$ is
\textit{fragmentable} if there exists a metric $d$ on $K$, with the
property that given $\varepsilon > 0$ and non-empty $E \subseteq T$,
there is an open set $U$ such that $U \cap E$ is non-empty and the
$d$-diameter of $E \cap U$ does not exceed $\varepsilon$. General
fragmentable compact spaces are not particularly well-behaved from
the point of view of renorming. Indeed, since every scattered space
is fragmented by the discrete metric, the compact $\wone + 1$ is
fragmentable, and it is well-known that $\dual{\Ck{\wone + 1}}$ does
not admit an equivalent, strictly convex dual norm; see, for example
\cite[Theorem VII.5.2]{dgz:93}. On the other hand, if $\dual{X}$
does admit an equivalent, strictly convex dual norm, then
$\ball{\dual{X}}$ is fragmentable in the $\weakstar$-topology
\cite{ribarska:92}.

The class of Gruenhage compact spaces fits between those of
descriptive and fragmentable spaces.

\begin{defn}[Gruenhage \cite{gru:87}]
\label{gruenhage} A topological space $X$ is called a
\textit{Gruenhage} space if there exist families $(\mathscr{U}_n)_{n
\in \nat}$ of open sets such that given distinct $x,y \in X$, there
exists $n \in \nat$ and $U \in \mathscr{U}_n$ with two properties:
\begin{enumerate}
\item $U \cap \{x,y\}$ is a singleton;
\item either $x$ lies in finitely many $U^\prime \in
\mathscr{U}_n$ or $y$ lies in finitely many $U^\prime \in
\mathscr{U}_n$.
\end{enumerate}
\end{defn}

If we were to follow Gruenhage's definition to the letter, the
sequence $(\mathscr{U}_n)$ above would have to cover $X$ as well,
but this demand is not necessary as property (1) forces the sequence
to cover all points of $X$, with at most one exception. Gruenhage
calls such sequences $\sigma$\textit{-distributively point-finite
}$T_0$\textit{-separating covers} of $X$.

In the next section, we investigate the role of Gruenhage spaces in
renorming theory. In the third section, we give a partial converse
to Theorem \ref{dualrotund}, the principal result of the second
section, and, by virtue of examples, get some measure of the gap
between descriptive compact spaces and Gruenhage compact spaces. The
last section is devoted to proving certain stability properties of
the class of Gruenhage spaces and its subclass of compact spaces.

\section{Gruenhage compacta and renorming}

We shall say that a family $\mathscr{H}$ of subsets of a topological
space $X$ \textit{separates points} if, given distinct $x,y \in X$,
there exists $H \in \mathscr{H}$ such that $\{x,y\} \cap H$ is a
singleton. It should be noted that some authors demand more of point
separation, namely that $H$ can be chosen in such a way that
$\{x,y\} \cap H = \{x\}$.

The next proposition brings together some equivalent formulations of
Gruenhage's definition that will be of use to us.

\begin{prop}
\label{equivalences} Let $X$ be a topological space. The following
are equivalent.
\begin{enumerate}
\item $X$ is a Gruenhage space;
\item there exists a sequence $(A_n)$ of closed sets and a sequence
$(\mathscr{H}_n)$ of families such that $\bigcup_n \mathscr{H}_n$
separates points, and furthermore each element of $\mathscr{H}_n$ is
an open subset of $A_n$ and disjoint from every other element of
$\mathscr{H}_n$;
\item there exists a sequence $(\mathscr{U}_n)$ of families of open
subsets of $X$ and sets $R_n$, such that $\bigcup_n \mathscr{U}_n$
separates points and $U \cap V = R_n$ whenever $U,V \in
\mathscr{U}_n$ are distinct.
\end{enumerate}
\end{prop}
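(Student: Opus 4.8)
The plan is to prove the equivalences by the cycle $(1) \Rightarrow (2) \Rightarrow (3) \Rightarrow (1)$. The implication $(1) \Rightarrow (2)$ is the one I expect to require the most care, so I would set it up first. Given the open families $(\mathscr{U}_n)$ from Definition~\ref{gruenhage}, the natural move is to stratify each $\mathscr{U}_n$ by point-order: for each $n$ and each integer $k \geq 0$, let $A_{n,k}$ be the set of points of $X$ that belong to at most $k$ members of $\mathscr{U}_n$. I would first observe that $A_{n,k}$ is closed, since a point lying in $k+1$ distinct members $\kton{U}{0}{k}$ has the open neighbourhood $U_0 \cap \dots \cap U_k$, all of whose points also lie in at least $k+1$ members; thus the complement of $A_{n,k}$ is open. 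For each $k$-element subfamily $S \subseteq \mathscr{U}_n$ I would then put $H_S = A_{n,k} \cap \bigcap_{U \in S} U$, a relatively open subset of $A_{n,k}$ (the trace of a finite intersection of open sets), and collect these into $\mathscr{H}_{n,k} = \setcomp{H_S}{S \subseteq \mathscr{U}_n,\ \card S = k}$. Disjointness of $\mathscr{H}_{n,k}$ is clean: a common point of $H_S$ and $H_{S'}$ would lie in every member of $S \cup S'$ while lying in at most $k$ members, forcing $S = S'$. Re-indexing the pairs $(n,k)$ by $\nat$ then supplies the closed sets and disjoint relatively open families demanded by (2).

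The delicate part of $(1) \Rightarrow (2)$ is checking that $\bigcup_{n,k}\mathscr{H}_{n,k}$ separates points, and this is where the point-finiteness clause of Definition~\ref{gruenhage} must be used. Given distinct $x,y$, choose $n$ and $U \in \mathscr{U}_n$ with $U \cap \{x,y\}$ a singleton, say $x \in U$ and $y \notin U$. If $x$ lies in only finitely many members, let $S$ be exactly the set of members containing $x$, of size $k$ say; then $x \in H_S$, whereas $y \notin H_S$ because $U \in S$ yet $y \notin U$. If instead $x$ lies in infinitely many members, then by the point-finiteness clause $y$ lies in finitely many, say $k$; taking $S$ to be the members containing $y$ gives $y \in H_S$, while $x \notin A_{n,k}$ (as $x$ has infinite order) and so $x \notin H_S$. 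One of the two cases always applies, so separation holds.

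For $(2) \Rightarrow (3)$ the idea is to turn the relatively open sets into genuinely open ones while forcing a common pairwise intersection. Writing $H = A_n \cap W_H$ with $W_H$ open in $X$, I would set $U_H = W_H \cup (X \setminus A_n)$ and $R_n = X \setminus A_n$. Each $U_H$ is open, and for distinct $H, H'$ the intersection $U_H \cap U_{H'}$ equals $R_n \cup (W_H \cap W_{H'})$; since $W_H \cap W_{H'} \cap A_n = H \cap H' = \emptyset$ forces $W_H \cap W_{H'} \subseteq R_n$, this collapses to exactly $R_n$. The one gap is that $\setcomp{U_H}{H \in \mathscr{H}_n}$ only separates pairs lying inside $A_n$; to cover the rest I would adjoin to the sequence the singleton families $\{X \setminus A_n\}$, whose intersection condition is vacuous and which separate each point of $A_n$ from each point outside $A_n$. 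A brief case split according to whether the point avoiding the separating $H$ lies in $A_n$ then finishes this implication.

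Finally, $(3) \Rightarrow (1)$ should be the quickest: I would show that the families $(\mathscr{U}_n)$ themselves witness the Gruenhage property. Since $R_n = U \cap V \subseteq U$ for any two distinct members, $R_n$ is contained in \emph{every} member of $\mathscr{U}_n$, and consequently any point lying in two distinct members must lie in $R_n$. Now given distinct $x,y$ and a separating $U \in \mathscr{U}_n$ with $x \in U$ and $y \notin U$, we have $y \notin R_n$, so $y$ lies in at most one member of $\mathscr{U}_n$; this yields the finiteness required by the second clause of Definition~\ref{gruenhage}, the cases $\card \mathscr{U}_n \leq 1$ being trivial. Overall, the main obstacle I anticipate is organising the point-order stratification in $(1)\Rightarrow(2)$ and correctly handling the sub-case in which the point inside the separating set has infinite order.
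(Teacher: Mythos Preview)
Your argument is correct and follows the same cycle $(1)\Rightarrow(2)\Rightarrow(3)\Rightarrow(1)$ as the paper, but with two genuine differences worth noting.

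For $(1)\Rightarrow(2)$ the paper simply cites \cite[Proposition~7.4]{stegall:91}, whereas you supply a self-contained argument via the point-order stratification $A_{n,k}=\{x:\mathrm{ord}(x,\mathscr{U}_n)\le k\}$ and the families $H_S=A_{n,k}\cap\bigcap S$ indexed by $k$-element subfamilies $S\subseteq\mathscr{U}_n$. This is exactly the construction underlying Stegall's result, so you are in effect unpacking the citation; the advantage is that your write-up is self-contained, at the cost of a few extra lines. Your case split for separation (according to whether the point inside the separating set has finite or infinite order in $\mathscr{U}_n$) is precisely where clause~(2) of Definition~\ref{gruenhage} enters, and you handle it cleanly.

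For $(2)\Rightarrow(3)$ you and the paper do the same thing: set $R_n=X\setminus A_n$ and replace each $H\in\mathscr{H}_n$ by $H\cup R_n$. You are slightly more careful than the paper in explicitly adjoining the singleton families $\{X\setminus A_n\}$ to handle the case where the point missing $H$ lies outside $A_n$; the paper's one-line proof glosses over this, though it is implicitly recovered later.

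For $(3)\Rightarrow(1)$ your route is marginally simpler than the paper's. The paper introduces auxiliary singleton families $\mathscr{V}_n=\{R_n\}$ and splits on whether $x\in R_n$, using $\mathscr{V}_n$ in one case and $\mathscr{U}_n$ in the other. You observe instead that the families $(\mathscr{U}_n)$ already witness the Gruenhage property on their own: since $R_n\subseteq U$ for every $U\in\mathscr{U}_n$ (when $\card\mathscr{U}_n\ge 2$), the point $y\notin U$ avoids $R_n$ and therefore lies in at most one member of $\mathscr{U}_n$, giving the required point-finiteness directly. Both arguments are short; yours avoids the auxiliary families.
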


\begin{proof}
(1) $\Rightarrow$ (2) follows directly from \cite[Proposition
7.4]{stegall:91}. Suppose that (2) holds. To obtain (3), simply
define $R_n = K\backslash A_n$ and set $\mathscr{U}_n = \setcomp{H
\cup R_n}{H \in \mathscr{H}_n}$. Finally, if (3) holds, define
$\mathscr{V}_n = \{R_n\}$. Given distinct $x,y \in X$, there exists
$n$ and $U \in \mathscr{U}_n$ such that $\{x,y\} \cap U$ is a
singleton. Let us assume that $x \in U$. There are two cases. If $x
\in R_n$ then $y \notin R_n$ because $R_n \subseteq U$, thus
$\{x,y\} \cap R_n$ is a singleton and, since $\mathscr{V}_n$ is a
singleton, $x$ is in exactly one element of $\mathscr{V}_n$.
Alternatively, we assume that $x \notin R_n$. Then $x \in V \in
\mathscr{U}_n$ forces $V = U$. Hence $x$ is in exactly one element
of $\mathscr{U}_n$. This shows that $X$ is Gruenhage.
\end{proof}

The second formulation presented in the proposition above prompts
the following definition.

\begin{defn}
\label{regularsystem} Let $X$ be a topological space. We call
$(A_n,\mathscr{H}_n)$ a \textit{legitimate system} if $A_n$ and
$(\mathscr{H}_n)$ are as in Proposition \ref{equivalences}, part
(2). We say that $\mathscr{H} = \bigcup_n \mathscr{H}_n$ is the
\textit{union} of the system.
\end{defn}

The next result follows easily.

\begin{cor}
\label{desciptivegru} A descriptive compact space is Gruenhage.
\end{cor}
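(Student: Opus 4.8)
The plan is to produce a legitimate system in the sense of Definition \ref{regularsystem} and then invoke the implication (2) $\Rightarrow$ (1) of Proposition \ref{equivalences}. Let $\mathscr{H}=\bigcup_n\mathscr{H}_n$ be a $\sigma$-isolated network witnessing that $K$ is descriptive, with each $\mathscr{H}_n$ isolated. First I would observe that $\mathscr{H}$ separates points, which is immediate: given distinct $x,y$, since $K$ is compact Hausdorff and hence normal, choose an open $U\ni x$ with $y\notin U$; the network property then yields $H\in\mathscr{H}$ with $x\in H\subseteq U$, so that $\{x,y\}\cap H=\{x\}$.

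The subtle point is that one cannot simply feed the sets $H$ into Proposition \ref{equivalences}(2): that formulation demands each member be \emph{relatively open in a closed set}, i.e.\ locally closed, whereas a general network element need not be (picture a copy of $\rat\cap[0,1]$ sitting inside its closure). So instead I would manufacture a new, locally closed family that still separates points. For each $H\in\mathscr{H}_n$, use isolatedness to fix an open $U_H\supseteq H$ meeting no other member of $\mathscr{H}_n$. Writing $S_n=\bigcup\mathscr{H}_n$, set $A_n=\overline{S_n}$ and $G_H=U_H\cap A_n$, and let $\mathscr{G}_n=\{G_H:H\in\mathscr{H}_n\}$; each $G_H$ is relatively open in the closed set $A_n$.

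It then remains to verify three things. (i) Since $U_H$ avoids every $H'\neq H$ while containing $H$, one has $S_n\cap U_H=H$, and a short limiting argument upgrades this to $\overline{S_n}\cap U_H\subseteq\overline{H}$; hence $H\subseteq G_H\subseteq\overline{H}$. (ii) The members of $\mathscr{G}_n$ are pairwise disjoint: a common point $p\in G_H\cap G_{H'}$ would be the limit of a net in $S_n$ lying eventually inside the open set $U_H\cap U_{H'}$, forcing those points into $H\cap H'=\emptyset$, a contradiction. (iii) The union $\bigcup_n\mathscr{G}_n$ separates points: given distinct $x,y$, use normality to pick open $V_x\ni x$, $V_y\ni y$ with $\overline{V_x}\cap\overline{V_y}=\emptyset$, and let the network supply $H$ with $x\in H\subseteq V_x$; then $G_H\subseteq\overline{H}\subseteq\overline{V_x}$ while $y\notin\overline{V_x}$, so $\{x,y\}\cap G_H=\{x\}$. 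This makes $(A_n,\mathscr{G}_n)$ a legitimate system, and Proposition \ref{equivalences} gives that $K$ is Gruenhage. I expect step (ii) to be the crux: passing to the closure $A_n$ is precisely the move that could have wrecked the disjointness combinatorics, and it survives only because $U_H$ isolates $H$ from the remaining members of $\mathscr{H}_n$.
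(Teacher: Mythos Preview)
Your argument is correct and self-contained, but it differs from the paper's proof, which is a one-line citation: Raja \cite{raja:03} already shows that a descriptive compact $K$ carries a legitimate system whose union is a \emph{network} for $K$, and the corollary follows immediately from Proposition~\ref{equivalences}.

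Your route is more elementary in that it avoids the external reference and builds the legitimate system by hand from the $\sigma$-isolated network. The trade-off is that your system $(A_n,\mathscr{G}_n)$ only \emph{separates points}, whereas Raja's construction produces one whose union is a genuine network; the latter is not needed for the bare statement here, but the paper relies on the stronger conclusion later (Lemma~\ref{norming} and the renorming theorems use the network structure via the refined partitions $\mathscr{D}_n$). One small remark on your step~(ii): it is worth making explicit that an isolated family is automatically pairwise disjoint (since $H\subseteq U_H$ and $U_H\cap H'=\varnothing$ force $H\cap H'=\varnothing$), so that the phrase ``$H\cap H'=\emptyset$'' is justified; otherwise a reader might pause there.
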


\begin{proof}
In \cite{raja:03}, Raja shows that if $K$ is a descriptive compact
space then there exists a legitimate system $(A_n,\mathscr{H}_n)$
such that its union $\mathscr{H}$ is a network for $K$.
\end{proof}

%
We will spend a little time preparing our legitimate systems for
battle. We can and do assume for the rest of this section that every
legitimate system $(A_n,\mathscr{H}_n)$, with union $\mathscr{H}$,
satisfies three properties:

\begin{enumerate}
\item $\mathscr{H}$ is closed under the taking of finite intersections;
\item $K\backslash A_n \in \mathscr{H}$ for all $n$;
\item $A_n \backslash \bigcup \mathscr{H}_n \in \mathscr{H}$ for all $n$.
\end{enumerate}
Indeed, we first extend the system $(A_n,\mathscr{H}_n)$ by adding
the pairs $(K,\{K\backslash A_n\})$ and $(A_n \backslash \bigcup
\mathscr{H}_n, \{A_n \backslash \bigcup \mathscr{H}_n\})$ for every
$n$. We denote the extended system again by $(A_n,\mathscr{H}_n)$
and then consider, for each non-empty, finite $F \subseteq \nat$,
the pairs $(A_F,\mathscr{H}_F)$, where $A_F = \bigcap_{n \in F} A_n$
and
$$
\mathscr{H}_F \;=\; \setcomp{\textstyle{\bigcap_{n \in F}} H_n}{H_n
\in \mathscr{H}_F}.
$$

A family $\mathscr{H}$ of pairwise disjoint subsets of $K$ is called
\textit{scattered} if there exists a well-ordering $(H_\xi)_{\xi <
\lambda}$ of $\mathscr{H}$ such that $\bigcup_{\xi < \alpha} H_\xi$
is open in $\bigcup \mathscr{H}$ for all $\alpha < \lambda$.
Equivalently, $\mathscr{H}$ is scattered if, given non-empty $M
\subseteq \bigcup \mathscr{H}$, there exists $H \in \mathscr{H}$
such that $M \cap H$ is non-empty and open in $M$. Scattered
families naturally generalise isolated ones. The following lemma is
a simple extension of Rudin's result that Radon measures on
scattered compact spaces are atomic. We can state it in greater
generality than required, without compromising the simplicity of the
proof. We will say that $H \subseteq K$ is \textit{universally Radon
measurable} (uRm) if, given positive $\mu \in \dual{\Ck{K}}$, there
exist Borel sets $E$, $F$ such that $E \subseteq H \subseteq F$ and
$\mu(E) = \mu(F)$; equivalently, $H$ can be measured by the
completion of each such $\mu$, which we again denote by $\mu$.

\begin{lem}
\label{rudinextension} If $\mathscr{H}$ is a scattered family of uRm
subsets of a compact space $K$ then $\bigcup \mathscr{H}$ is uRm and
$\mu(\bigcup \mathscr{H}) = \sum_{H \in \mathscr{H}} \mu(H)$ for
every positive $\mu \in \dual{\Ck{K}}$.
\end{lem}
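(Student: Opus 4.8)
The plan is to fix a positive measure $\mu\in\dual{\Ck{K}}$, pass to the completion (still written $\mu$) with associated outer measure $\mu^*(A)=\inf\setcomp{\mu(B)}{A\subseteq B,\ B\text{ Borel}}$, and run a transfinite induction along the well-ordering witnessing that $\mathscr{H}$ is scattered. Write $\mathscr{H}=\setcomp{H_\xi}{\xi<\lambda}$ with each $W_\alpha:=\bigcup_{\xi<\alpha}H_\xi$ open in $W:=\bigcup\mathscr{H}$, and fix once and for all a Borel set $B\supseteq W$ with $\mu(B)=\mu^*(W)$; standard finite-measure theory gives that $B$ is a measurable envelope, meaning $\mu^*(W\cap E)=\mu(B\cap E)$ for every Borel $E$. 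The inductive claim is that each $W_\alpha$ is $\mu$-measurable with $\mu(W_\alpha)=\sum_{\xi<\alpha}\mu(H_\xi)$; the case $\alpha=\lambda$ is exactly the assertion of the lemma, and since $\mu$ was arbitrary it yields that $W$ is uRm.

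The base case is trivial and the successor step is routine: $W_{\alpha+1}=W_\alpha\cup H_\alpha$ is a disjoint union of two $\mu$-measurable sets (the first by hypothesis, the second because $H_\alpha$ is uRm), so additivity of $\mu$ does the rest. The real content lives at limit ordinals $\alpha$, where $W_\alpha=\bigcup_{\beta<\alpha}W_\beta$ is an increasing, possibly uncountable union of the relatively open sets $W_\beta$. Here one cannot argue by countable subadditivity, nor can one appeal to continuity of $\mu$ from below, since for an arbitrary increasing transfinite chain of measurable sets that property fails (under the Continuum Hypothesis the unit interval is an increasing $\wone$-union of Lebesgue-null sets). This is the step I expect to be the main obstacle, and the point at which the topological hypothesis built into scatteredness must be used.

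The key observation resolving the limit step is that the chain consists of relatively open sets. Writing $W_\beta=W\cap U_\beta$ with $U_\beta$ open in $K$, and replacing $U_\beta$ by $\bigcup_{\gamma\le\beta}U_\gamma$, we may assume the $U_\beta$ increase; then $U:=\bigcup_{\beta<\alpha}U_\beta$ is open and $W_\alpha=W\cap U$. Using the envelope property, $\mu^*(W_\alpha)=\mu(B\cap U)$ and $\mu^*(W_\beta)=\mu(B\cap U_\beta)$, and the crucial claim is $\mu(B\cap U)=\sup_{\beta<\alpha}\mu(B\cap U_\beta)$. This is where openness is decisive: by inner regularity of the finite Radon measure $\mu$, it suffices to dominate $\mu(C)$ for compact $C\subseteq B\cap U$, and since $C\subseteq U=\bigcup U_\beta$ with the $U_\beta$ open and increasing, compactness yields $C\subseteq U_\beta$ for a single $\beta$, whence $\mu(C)\le\mu(B\cap U_\beta)$. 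Thus $\mu^*(W_\alpha)=\sup_\beta\mu^*(W_\beta)=\sup_\beta\sum_{\xi<\beta}\mu(H_\xi)=\sum_{\xi<\alpha}\mu(H_\xi)$, while the lower bound $\mu_*(W_\alpha)\ge\mu(W_\beta)=\sum_{\xi<\beta}\mu(H_\xi)$ for every $\beta<\alpha$ forces the inner and outer measures of $W_\alpha$ to agree. Hence $W_\alpha$ is $\mu$-measurable with the correct value, completing the induction; the finiteness of $\mu$ guarantees the series $\sum_{H\in\mathscr{H}}\mu(H)$ converges, as all but countably many terms vanish.
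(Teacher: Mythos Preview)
Your proof is correct. Both you and the paper use the same underlying mechanism---inner regularity of the Radon measure coupled with compactness to pass from an uncountable increasing union of open sets to a single stage---but the arguments are organised differently. The paper runs a transfinite induction on the \emph{length} $\lambda$ of the well-ordering: it reduces by $\sigma$-additivity to the case where $\lambda$ has uncountable cofinality, takes open sets $U_\alpha\supseteq H_\alpha$ with $U_\alpha\cap H_\beta=\varnothing$ for $\alpha<\beta$, observes that only countably many of the disjoint Borel pieces $D_\beta=U_\beta\setminus\bigcup_{\xi<\beta}U_\xi$ can have positive $\mu$-mass, and hence (by uncountable cofinality) finds $\alpha<\lambda$ beyond which all $D_\beta$ are null; regularity then gives $\mu\bigl(\bigcup_{\beta\ge\alpha}D_\beta\bigr)=0$, and the inductive hypothesis applied to $\{H_\xi:\xi<\alpha\}$ finishes. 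Your version instead inducts on the \emph{stage} $\alpha$ within a fixed well-ordering, using a measurable envelope $B\supseteq W$ so that outer measures of $W\cap U$ become genuine measures of $B\cap U$, and then handles the limit step by the supremum argument $\mu(B\cap U)=\sup_\beta\mu(B\cap U_\beta)$. Your route avoids the cofinality case split and is perhaps more systematic; the paper's is terser and isolates exactly where the finiteness of $\mu$ intervenes (only countably many $D_\beta$ carry mass). Either way the essential topological input---that initial segments of a scattered family are relatively open, so their union is controlled by compact subsets meeting only an initial segment---is the same.
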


\begin{proof}
Take a well-ordering $(H_\xi)_{\xi < \lambda}$ of $\mathscr{H}$ and
open sets $U_\alpha$, $\alpha < \lambda$, such that $H_\alpha
\subseteq U_\alpha$ and $U_\alpha \cap H_\beta$ is empty whenever
$\alpha < \beta$. We proceed by transfinite induction on $\lambda$;
note that by $\sigma$-additivity, we can assume that $\lambda$ is a
limit ordinal of uncountable cofinality. Set $D_\alpha = U_\alpha
\backslash \bigcup_{\xi < \alpha} U_\xi$ for $\alpha < \lambda$.
Given positive $\mu \in \dual{\Ck{K}}$, by the uncountable
cofinality, let $\alpha < \lambda$ such that $\mu(D_\beta) = 0$ for
$\alpha \leq \beta < \lambda$. The regularity of $\mu$ ensures that
$\mu(D) = 0$, where $D = \bigcup_{\alpha \leq \beta < \lambda}
D_\beta$. By inductive hypothesis, there exist Borel sets $E$, $F$
such that $E \subseteq \bigcup_{\xi < \alpha} H_\xi \subseteq F$ and
$\mu(E) = \mu(F) = \sum_{\xi < \alpha} \mu(H)$, so the conclusion
follows when we consider $E$ and $F \cup D$.
\end{proof}

It is evident that, given a legitimate system $(A_n,\mathscr{H}_n)$,
the family $\mathscr{H}^\prime_n = \mathscr{H}_n \cup \{K\backslash
A_n, A_n\backslash \bigcup \mathscr{H}_n\}$ is scattered and has
union $K$. Moreover, the family
$$
\mathscr{D}_n = \textstyle{\setcomp{\bigcap_{i \leq n} H_1 \cap
\ldots \cap H_n}{H_i \in \mathscr{H}^\prime_i}}
$$
also enjoys these properties. Readers familiar with related
literature will recognise that these families lead directly to
fragmentability, via Ribarska's characterisation of fragmentable
spaces \cite{ribarska:87}. Elements of the proof of the following
result appear in \cite{stegall:91}. We denote both canonical norms
on $\Ck{K}$ and $\bidual{\Ck{K}}$ by $\pnormdot{\infty}$, and that
of $\dual{\Ck{K}}$ by $\pnormdot{1}$. We will be identifying certain
subsets of $K$ with their indicator functions, either in $\Ck{K}$ or
$\bidual{\Ck{K}}$.

\begin{lem}
\label{norming} Let $(A_n,\mathscr{H}_n)$ be a legitimate system
that separates points, with union $\mathscr{H}$ and which satisfies
properties (1) -- (3) above. Then $N =
\overline{\aspan}^{\pnormdot{\infty}}(\mathscr{H})$ is a subalgebra
of $\bidual{\Ck{K}}$ that is 1-norming for $\dual{\Ck{K}}$.
\end{lem}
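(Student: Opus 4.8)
The plan is to work inside the commutative $C^*$-algebra $\bidual{\Ck{K}}$, in which the product of the indicator functions of two Borel subsets of $K$ is the indicator of their intersection, and to treat the algebra assertion and the norming assertion separately. For the algebraic claim I would first record that every $H \in \mathscr{H}$ is Borel: an element of $\mathscr{H}_n$ is open in the closed set $A_n$, while $K \setminus A_n$ and $A_n \setminus \bigcup \mathscr{H}_n$ are manifestly Borel, so each $\ind{H}$ defines an element of $\bidual{\Ck{K}}$ via $\mu \mapsto \int \ind{H}\,\md\mu$. Since $\ind{H} \cdot \ind{H'} = \ind{H \cap H'}$ and $\mathscr{H}$ is closed under finite intersections by property (1), the $\aspan$ of $\{\ind{H} : H \in \mathscr{H}\}$ is closed under multiplication, hence a subalgebra; as multiplication in a $C^*$-algebra is $\pnormdot{\infty}$-continuous, its closure $N$ is again a subalgebra, which settles the first assertion.

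For the norming claim the inequality $\sup\{\langle f,\mu\rangle : f \in N,\ \pnorm{f}{\infty} \le 1\} \le \pnorm{\mu}{1}$ is automatic, so the task is to produce, for each $\mu$ and each $\varepsilon > 0$, an $f \in N$ with $\pnorm{f}{\infty} \le 1$ and $\langle f,\mu\rangle > \pnorm{\mu}{1} - \varepsilon$. The natural devices are the scattered partitions preceding the lemma: with $\mathscr{H}'_n = \mathscr{H}_n \cup \{K \setminus A_n,\, A_n \setminus \bigcup \mathscr{H}_n\}$, let $\mathscr{D}_n$ consist of the cells $H_1 \cap \dots \cap H_n$ with $H_i \in \mathscr{H}'_i$. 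Each $\mathscr{D}_n$ is a scattered family of Borel sets with union $K$, the family $\mathscr{D}_{n+1}$ refines $\mathscr{D}_n$, and by properties (1)--(3) every cell lies in $\mathscr{H}$, so its indicator belongs to $N$.

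Given $\mu$, let $|\mu|$ denote its variation and fix a Hahn positive set $P$. On the cells I would take $f_n = \sum_D \sgn(\mu(D))\,\ind{D}$; truncating to finitely many cells that, by Lemma \ref{rudinextension} applied to the positive measure $|\mu|$, carry all but $\varepsilon$ of the mass yields an element of $N$ of norm at most $1$ with $\langle f_n,\mu\rangle \ge \sum_{D \in \mathscr{D}_n}|\mu(D)| - \varepsilon$. The sums $\sum_{D \in \mathscr{D}_n}|\mu(D)|$ are nondecreasing in $n$ (refining a cell can only increase the sum, by the triangle inequality) and never exceed $|\mu|(K)$, so everything reduces to showing $\sum_{D \in \mathscr{D}_n}|\mu(D)| \to |\mu|(K)$. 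This supremum is the variation of $\mu$ restricted to $\mathscr{F}_\infty = \sigma\bigl(\bigcup_n \mathscr{D}_n\bigr)$, which equals $|\mu|(K)$ exactly when $P$ belongs to $\mathscr{F}_\infty$ modulo $|\mu|$-null sets; equivalently, one reaches this via the martingale convergence theorem applied to the sign density $\md\mu/\md|\mu|$ along the filtration $(\mathscr{F}_n)$.

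This last point is the main obstacle and the only place requiring genuine work, since $K$ need not be metrizable and the separating sets are merely Borel, so the cells cannot be expected to resolve the Hahn decomposition at any finite level. I would resolve it with Lusin's theorem. The map $\phi : K \to \{0,1\}^{\mathscr{H}}$, $\phi(x) = (\ind{H}(x))_{H \in \mathscr{H}}$, is Borel and, because $\mathscr{H}$ separates points, injective. Lusin's theorem supplies, for each $\delta > 0$, a compact $C \subseteq K$ with $|\mu|(K \setminus C) < \delta$ on which $\phi$ is continuous, hence a homeomorphism onto the compact set $\phi(C)$. Injectivity gives $C = \phi^{-1}(\phi(C)) \in \sigma(\mathscr{H})$ and, for any Borel $B$, $B \cap C = \phi^{-1}(\tilde{B} \cap \phi(C)) \in \sigma(\mathscr{H})$ for a suitable Borel $\tilde{B}$; letting $\delta \to 0$ shows every Borel set, in particular $P$, lies in $\sigma(\mathscr{H}) \subseteq \mathscr{F}_\infty$ modulo $|\mu|$-null sets. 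With this in hand $\sum_{D \in \mathscr{D}_n}|\mu(D)| \to |\mu|(K)$, so $\langle f_n,\mu\rangle \to \pnorm{\mu}{1}$, completing the proof that $N$ is $1$-norming.
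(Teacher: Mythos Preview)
Your subalgebra argument is fine, and your overall strategy for the norming part---reduce to showing $\sum_{D \in \mathscr{D}_n}|\mu(D)| \to |\mu|(K)$ via a martingale/filtration argument, which in turn reduces to showing that the Hahn set $P$ lies in $\sigma(\mathscr{H})$ modulo $|\mu|$-null sets---is sound. The gap is in the Lusin step you use to establish this last point.

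The map $\phi:K\to\{0,1\}^{\mathscr{H}}$ takes values in a product which, in the non-separable setting that motivates this paper, is indexed by an \emph{uncountable} $\mathscr{H}$. Lusin's theorem in its standard forms requires the target to be second countable (or at least that the image be separable metric), and this fails here; the assertion ``$\phi$ is Borel'' is already suspect, since the preimage of an open subset of $\{0,1\}^{\mathscr{H}}$ is an uncountable union of Borel sets. Even if one grants a compact $C$ on which $\phi$ is continuous, your inference that $C=\phi^{-1}(\phi(C))$ and $B\cap C$ lie in $\sigma(\mathscr{H})$ conflates the Borel and Baire $\sigma$-algebras on $\{0,1\}^{\mathscr{H}}$: $\phi(C)$ is closed, hence Borel, but it need not lie in the cylinder $\sigma$-algebra, and it is only preimages of cylinder-$\sigma$-algebra sets that are guaranteed to land in $\sigma(\mathscr{H})$. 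Equivalently, on $C$ the sets $H\cap C$ are clopen and generate the topology, but the $\sigma$-algebra they generate is the Baire $\sigma$-algebra of $C$, which can be strictly smaller than the Borel $\sigma$-algebra.

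The paper's proof avoids this by building the good compact set \emph{by hand}, exploiting the disjointness of the $\mathscr{H}_n$: for each $n$ one selects finitely many cells $\mathscr{F}_n\subseteq\mathscr{D}_n$ and compact $K_D\subseteq D$ carrying almost all the mass, and sets $M=\bigcap_n\bigcup_{D\in\mathscr{F}_n}K_D$. On $M$ each $M\cap D$ is clopen, the families $\mathscr{M}_n$ refine one another and separate points, and Stone--Weierstrass gives $\Ck{M}=\overline{\aspan}^{\pnormdot{\infty}}(\mathscr{M})$; one then approximates the sign of $\mu$ on $M$ and transfers back to the $D$'s. This is exactly the ``Lusin done carefully'' that your sketch needs: it uses inner regularity cell-by-cell rather than invoking Lusin globally on an uncountable product. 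Once $M$ is in hand, Stone--Weierstrass and your martingale argument are essentially interchangeable ways to finish.
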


\begin{proof}
Let $\mathscr{D}_n$ be the families introduced above, with union
$\mathscr{D}$. As $\mathscr{H}$ separates points, so does
$\mathscr{D}$. If $\mu \in \dual{\Ck{K}}$ has variation $|\mu|$ then
we have $\pnorm{\mu}{1} = \sum_{D \in \mathscr{D}_n} |\mu|(D)$ by
Lemma \ref{rudinextension}. Thus, given $\varepsilon > 0$, we can
take finite subsets $\mathscr{F}_n \subseteq \mathscr{D}_n$ and
compact subsets $K_D \subseteq D$, $D \in \mathscr{F}_n$, such that
$\sum_n |\mu|(K\backslash \bigcup_{D \in \mathscr{F}_n} K_D) <
\varepsilon$. Put $M = \bigcap_n \bigcup_{D \in \mathscr{F}_n} K_D$
and $M_D = M \cap K_D = M \cap D$. If $\mathscr{M}_n =
\setcomp{M_D}{D \in \mathscr{F}_n}$ then $\mathscr{M}_n$ is family
of pairwise disjoint sets with union $M$, and $\mathscr{M}_{n+1}$
refines $\mathscr{M}_n$. Moreover, each $M_D$ is clopen in $M$ and,
as $\mathscr{D}$ separates points of $K$, so $\mathscr{M} =
\bigcup_n \mathscr{M}_n$ separates points of $M$. Therefore, by the
Stone-Weierstrass Theorem, $\Ck{M} =
\overline{\aspan}^{\pnormdot{\infty}}(\mathscr{M})$.

It follows that we can take non-empty, disjoint $M_{D_i} \in
\mathscr{M}$ and $a_i \in [1,-1]$, $i \leq n$, such that $|\mu|(M) -
\sum_{i \leq n} a_i \mu(M_{D_i}) < \varepsilon$. Now $M_{D_i},
M_{D_j} \neq \varnothing$ and $M_{D_i} \cap M_{D_j} = \varnothing$
implies $D_i \cap D_j = \varnothing$. Therefore, $\sum_{i \leq n}
|\mu|(D_i\backslash M_{D_i}) \leq |\mu|(K\backslash M) <
\varepsilon$. We conclude that $\pnorm{\mu}{1} - \sum_{i \leq n} a_i
\mu(D_i) < 2\varepsilon$. Since $\mathscr{D} \subseteq \mathscr{H}$,
we are done.
\end{proof}

We say that a norm $\normdot$ on $X$ is \textit{pointwise uniformly
rotund}, or \textit{p-UR}, if there exists a separating subspace $F
\subseteq \dual{X}$ such that, given sequences $(x_n)$ and $(y_n)$
satisfying $\norm{x_n} = \norm{y_n} = 1$ and $\norm{x_n + y_n}
\rightarrow 2$, then $f(x_n - y_n) \rightarrow 0$ for all $f \in F$;
see, for example \cite{rychtar:05}. Evidently, p-UR norms are
strictly convex. We can now present the main theorem.

\begin{thm}
\label{dualrotund} If $K$ is a Gruenhage compact then:
\begin{enumerate}
\item $\dual{\Ck{K}}$ admits an equivalent, strictly convex, dual lattice
norm;
\item $\dual{\Ck{K}}$ admits an equivalent, dual p-UR norm.
\end{enumerate}
\end{thm}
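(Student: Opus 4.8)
The plan is to build an explicit equivalent dual norm on $\dual{\Ck{K}}$ using the norming subalgebra $N$ supplied by Lemma \ref{norming}. Fix a legitimate system $(A_n,\mathscr{H}_n)$ separating points, arranged to satisfy properties (1)--(3), with union $\mathscr{H}$ and $N = \overline{\aspan}^{\pnormdot{\infty}}(\mathscr{H})$. Since $N$ is $1$-norming, I would first replace $\pnormdot{1}$ by the equivalent (in fact equal, by norming) $\weakstar$-lower semicontinuous norm and then add a quadratic perturbation that sees the elements of $\mathscr{H}$. Concretely, enumerate a countable dense-in-$\mathscr{H}$ collection of generators $h_k \in N$ (indicators of members of $\mathscr{H}$, suitably scaled so that $\sum_k \pnorm{h_k}{\infty}^2 < \infty$) and define
\begin{equation*}
\ptrinorm{\mu}{}^2 \;=\; \pnorm{\mu}{1}^2 + \sum_{k} 2^{-k}\,\ip{\mu}{h_k}^2 .
\end{equation*}
This is a dual norm because each functional $\mu \mapsto \ip{\mu}{h_k}$ is $\weakstar$-continuous (as $h_k \in \bidual{\Ck{K}}$ lies in $N$, which is norming, the pairing is with a fixed bidual element, so one must check $\weakstar$-lower semicontinuity of the sum — routine since it is a supremum of $\weakstar$-continuous functions of the partial sums). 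To get the lattice property in part (1) I would instead square the \emph{absolute} pairings or work with $|\mu|$ against positive generators, so that the resulting norm is monotone with respect to the lattice order; the indicator structure of $\mathscr{H}$ makes this natural.

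The heart of the argument is the strict convexity (equivalently, the stronger p-UR conclusion of part (2), which implies strict convexity). Suppose $\ptrinorm{\mu}{} = \ptrinorm{\nu}{} = \frac{1}{2}\ptrinorm{\mu+\nu}{}$; by the parallelogram-type rigidity of the quadratic terms I get $\ip{\mu}{h_k} = \ip{\nu}{h_k}$ for every $k$, and hence $\ip{\mu}{h} = \ip{\nu}{h}$ for all $h \in \mathscr{H}$ by density, i.e.\ $\mu$ and $\nu$ agree on the norming subalgebra $N$. The first term forces the $\pnormdot{1}$-norms to behave rigidly too. The crucial step is then to show that agreement on all indicators in $\mathscr{H}$, combined with equality of the convex-combination norms, forces $\mu = \nu$. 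This is where the point-separation property of $\mathscr{H}$ enters decisively: if $\mu \neq \nu$ as measures, then $\mu - \nu$ is a nonzero measure, and I must produce some $H \in \mathscr{H}$ on which $\mu - \nu$ has nonzero mass. Here Lemma \ref{rudinextension} and the scattered structure of the families $\mathscr{D}_n$ are the key tools: the total variation of $\mu - \nu$ is recovered as $\sum_{D \in \mathscr{D}_n} |\mu-\nu|(D)$, so a nonzero difference must register on some $D \in \mathscr{D} \subseteq \mathscr{H}$, contradicting the agreement.

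For the p-UR statement in part (2) the argument is parallel but quantitative: taking sequences $(\mu_n)$, $(\nu_n)$ on the unit sphere with $\ptrinorm{\mu_n + \nu_n}{} \to 2$, the strict-convexity slack in the quadratic terms is controlled uniformly, giving $\ip{\mu_n - \nu_n}{h_k} \to 0$ for each $k$, hence $f(\mu_n - \nu_n) \to 0$ for all $f$ in the separating subspace $F = \linspan{\mathscr{H}} \subseteq N$. Because $F$ is separating, this is exactly the p-UR condition; I must verify that $F$ (or its norm-closure) genuinely separates points of $\dual{\Ck{K}}$, which again reduces to the variation computation of Lemma \ref{rudinextension} applied to differences of measures.

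The main obstacle I anticipate is the passage from agreement on the indicators $h \in \mathscr{H}$ to equality of measures. Point-separation of $\mathscr{H}$ is a statement about distinguishing \emph{points} of $K$, whereas I need to distinguish \emph{measures}, so the implication is not formal. The decisive ingredient is that the scattered families $\mathscr{M}_n$ built inside the support of a given measure (as in the proof of Lemma \ref{norming}) generate, via Stone--Weierstrass, a dense subalgebra of $\Ck{M}$; this lets any measure be tested against members of $\mathscr{H}$ rather than against arbitrary continuous functions. Getting this approximation to interact correctly with the fixed countable family $(h_k)$ — so that a single dual norm works simultaneously for all measures — is the delicate point, and I expect it to require care in choosing the weights $2^{-k}$ and in invoking $\weakstar$-lower semicontinuity to pass to the limit.
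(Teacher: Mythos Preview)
Your proposal has two genuine gaps that block the argument as written. First, the family $\mathscr{H}$ is typically uncountable (take any non-metrizable Gruenhage compact), and distinct indicators $\ind{H}$, $\ind{H'}$ sit at $\pnormdot{\infty}$-distance $1$ in $\bidual{\Ck{K}}$, so there is no ``countable dense-in-$\mathscr{H}$ collection'' to enumerate: $\mathscr{H}$ is discrete in $N$. Any norm that sees all of $\mathscr{H}$ must therefore incorporate the whole uncountable family at once, not a countable subfamily. Second, and more damaging, the functionals $\mu \mapsto \ip{\mu}{h_k} = \mu(H_k)$ are \emph{not} $\weakstar$-continuous, because the indicators live in $\bidual{\Ck{K}}$ rather than in $\Ck{K}$; nor is $\mu \mapsto \mu(H)^2$ or $\mu \mapsto |\mu|(H)^2$ $\weakstar$-lower semicontinuous when $H$ is only relatively open in the closed set $A_n$. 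So the norm you write down need not be a dual norm at all.

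The paper resolves both difficulties simultaneously by an infimal-convolution device: for each $n,m$ it sets
\[
\pnorm{\mu}{n,m}^2 \;=\; \inf_{\lambda \in \dual{\Ck{A_n}}}\Bigl\{\, m^{-1}\textstyle\sum_{H \in \mathscr{H}_n} |\lambda|(H)^2 \;+\; \pnorm{\mu - \lambda}{1}^2 \,\Bigr\},
\]
and then $\norm{\mu}^2 = \pnorm{\mu}{1}^2 + \sum_{n,m} 2^{-n-m}\pnorm{\mu}{n,m}^2$. This seminorm is $\weakstar$-lsc because it can be rewritten as a supremum $\sup\{\mu(f): f \in B\}$ over an explicit bounded set $B \subseteq \Czerok{U}$ with $U$ genuinely open in $K$; and it absorbs the whole uncountable $\mathscr{H}_n$ into a single countable-index family of seminorms. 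Letting $m \to \infty$ forces the minimising $\lambda$ to approach $\mu\restrict{A_n}$, which is how one recovers $|\mu|(H)$ for every $H \in \mathscr{H}$ from the rigidity condition. Your endgame intuition is right: once one has $|\mu|(H) = |\nu|(H) = \tfrac{1}{2}|\mu+\nu|(H)$ for all $H$, the norming property of $N$ gives $|\mu| = |\nu| = \tfrac{1}{2}|\mu+\nu|$, and a short lattice argument finishes. For the p-UR part the paper does not use your norm directly but passes to $\trinorm{\mu}^2 = \norm{\mu_+}^2 + \norm{\mu_-}^2$, having first checked the p-UR property of $\normdot$ on positive measures.
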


\begin{proof}
The lattice norm is constructed first. We take a legitimate system
$(A_n, \mathscr{H}_n)$ satisfying the conclusion of Lemma
\ref{norming}. For $\mu \in \dual{\Ck{K}}$ and $m \geq 1$, define
the seminorm
$$
\pnorm{\mu}{n,m}^2 \;=\; \inf \setcomp{m^{-1}\textstyle{\sum_{H \in
\mathscr{H}_n}} |\lambda|(H)^2 + \pnorm{\mu - \lambda}{1}^2}{\lambda
\in \dual{\Ck{A_n}}}.
$$
We observe that $\pnorm{\mu}{n,m} \leq \pnorm{\mu}{1}$ and that
$\pnormdot{n,m}$ is $\weakstar$-lower semicontinuous. We can verify
the lower semicontinuity by applying a compactness argument.
Alternatively, if we denote the open set $(\bigcup \mathscr{H}_n)
\cup (K\backslash A_n)$ by $U$, we observe that $\pnorm{\mu}{n,m} =
\sup{\setcomp{\mu(f)}{f \in B}}$, where
$$
B \;=\; \setcomp{f \in \Czerok{U}}{m\textstyle{\sum_{H \in
\mathscr{H}_n}} \pnorm{f\restrict{H}}{\infty}^2 +
\pnorm{f}{\infty}^2 \leq 1}.
$$
In this way, we see that $\pnormdot{n,m}$ is also a lattice
seminorm.

We define a dual lattice norm on $\dual{\Ck{K}}$ by setting
$$
\norm{\mu}^2 \;=\; \pnorm{\mu}{1}^2 + \sum_{n,m} 2^{-n-m}
\pnorm{\mu}{n,m}^2.
$$
Now suppose that $\norm{\mu} = \norm{\nu} = \frac{1}{2}\norm{\mu +
\nu}$. A standard convexity argument (cf.\ \cite[Fact
II.2.3]{dgz:93}) yields
\begin{equation}
\label{eqn1} 2\pnorm{\mu}{n,m}^2 + 2\pnorm{\nu}{n,m}^2 - \pnorm{\mu
+ \nu}{n,m}^2 = 0
\end{equation}
for all $n$ and $m$. By appealing to compactness or the Hahn-Banach
Theorem, there exist $\mu_{n,m}, \nu_{n,m} \in \dual{\Ck{A_n}}$ such
that
$$
\pnorm{\mu}{n,m}^2 \;=\; m^{-1}\textstyle{\sum_{H \in
\mathscr{H}_n}} |\mu_{n,m}|(H)^2 + \pnorm{\mu - \mu_{n,m}}{1}^2
$$
and likewise for $\nu$. Hence, by applying further standard
convexity arguments to equation (\ref{eqn1}), we obtain
\begin{equation}
\label{eqn2} 2|\mu_{n,m}|(H)^2 + 2|\nu_{n,m}|(H)^2 - |\mu_{n,m} +
\nu_{n,m}|(H)^2 \;=\; 0
\end{equation}
for all $n$, $m$ and $H \in \mathscr{H}_n$. Now we estimate
\begin{eqnarray*}
\pnorm{\mu\restrict{A_n} - \mu_{n,m}}{1} &=& \pnorm{\mu -
\mu_{n,m}}{1} - \pnorm{\mu\restrict{K\backslash A_n}}{1} \\
&\leq& \pnorm{\mu}{n,m} - \pnorm{\mu\restrict{K\backslash A_n}}{1}
\\
&\leq& [m^{-1}\textstyle{\sum_{H \in \mathscr{H}_n}} |\mu|(H)^2 +
\pnorm{\mu \restrict{K\backslash A_n}}{1}^2]^{\frac{1}{2}} -
\pnorm{\mu\restrict{K\backslash A_n}}{1} \\
&\leq& m^{-\frac{1}{2}}
\end{eqnarray*}
because $\pnormdot{1} \leq \normdot$. A similar result holds for
$\nu$. Therefore, we conclude from equation (\ref{eqn2}) that
$$
2|\mu|(H)^2 + 2|\nu|(H)^2 - |\mu + \nu|(H)^2 \;=\; 0
$$
for all $H \in \mathscr{H}_n$ and $n \in \nat$. As $N$ from Lemma
\ref{norming} is norming, we certainly obtain $|\mu| = |\nu| =
\frac{1}{2}|\mu + \nu|$. This gives $\mu = \nu$ by the following
lattice argument, included for completeness. If $\lambda = \mu_+ -
\nu_-$ then $|\mu| = |\nu|$ implies $\lambda = \nu_+ - \mu_-$,
meaning $\mu + \nu = 2\lambda$. Hence $\mu_+ + \mu_- = |\mu| =
\frac{1}{2}|\mu + \nu| = \lambda_+ + \lambda_-$. We see that
$\lambda_+ = (\mu_+ - \nu_-)_+ \leq (\mu_+)_+ = \mu_+$, hence $\mu_+
= \lambda_+$ and $\mu_- = \lambda_-$. We conclude that $\mu = \nu$
as claimed.

Now we construct the p-UR norm, using the norming subspace $N$.
First, we claim that $\normdot$ above already satisfies the p-UR
property if $\mu_k$ and $\nu_k$ are positive. Suppose that $\mu_k$
and $\nu_k$ are positive measures such that $\norm{\mu_k} =
\norm{\nu_k} = 1$ and $\norm{\mu_k + \nu_k} \rightarrow 2$. As
above, we can find $\mu_{k,n,m}, \nu_{k,n,m} \in \dual{\Ck{A_n}}$
such that
$$
\pnorm{\mu}{k,n,m}^2 \;=\; m^{-1}\textstyle{\sum_{H \in
\mathscr{H}_n}} |\mu_{k,n,m}|(H)^2 + \pnorm{\mu_k -
\mu_{k,n,m}}{1}^2
$$
and likewise for $\nu_k$. By convexity arguments, we obtain
\begin{equation}
\label{eqn3} 2|\mu_{k,n,m}|(H)^2 + 2|\nu_{k,n,m}|(H)^2 -
|\mu_{k,n,m} + \nu_{k,n,m}|(H)^2 \rightarrow 0
\end{equation}
as $k \rightarrow \infty$. Moreover, if $H \in \mathscr{H}_n$, we
estimate
$$
|\mu_k - \mu_{k,n,m}|(H) \leq \pnorm{\mu_k \restrict{A_n} -
\mu_{k,n,m}}{1} \leq m^{-\frac{1}{2}}
$$
and likewise for $\nu_k$. Therefore, by fixing $m$ large enough and
appealing to equation (\ref{eqn3}), we get
$$
2\mu_k(H)^2 + 2\nu_k(H)^2 - (\mu_k + \nu_k)(H)^2 \rightarrow 0
$$
whence $(\mu_k - \nu_k)(H) \rightarrow 0$. It follows that
$\xi(\mu_k - \nu_k) \rightarrow 0$ for all $\xi \in N$, thus
completing the claim.

Now we set $\trinorm{\mu}^2 = \norm{\mu_+}^2 + \norm{\mu_-}^2$. To
see that this defines a dual norm, observe that as $\normdot$ is a
lattice norm, we have
$$
\norm{\mu_+} \;=\; \sup \setcomp{\mu(f)}{f \in \Ck{K}, f \geq 0
\mbox{ and }\norm{f} \leq 1}
$$
where $\normdot$ also denotes the predual norm. Thus $\mu \mapsto
\norm{\mu_+}$ is $\weakstar$-lower semicontinuous, and likewise for
$\mu \mapsto \norm{\mu_-}$. Now, given general $\mu_k$ and $\nu_k$
satisfying
$$
2\trinorm{\mu_k}^2 + 2\trinorm{\nu_k}^2 - \trinorm{\mu_k + \nu_k}^2
\rightarrow 0
$$
we get
$$
2\norm{(\mu_k)_+}^2 + 2\norm{(\nu_k)_+}^2 - \norm{(\mu_k)_+ +
(\nu_k)_+}^2 \rightarrow 0
$$
and similarly for $(\mu_k)_-$ and $(\nu_k)_-$. Therefore, we can
apply the claim twice to get $(\mu_k - \nu_k)(H) \rightarrow 0$ for
all $H \in \mathscr{H}$.
\end{proof}

We apply the theorem above to obtain renorming results for more
general Banach spaces. First, we give a modest generalisation of the
classic transfer method for LUR renormings, applied to strictly
convex renormings; cf.\ \cite[Theorem II.2.1]{dgz:93}. A proof is
provided for completeness.

\begin{prop}
\label{rotundtransfer} Let $\dual{(X,\normdot)}$,
$\dual{(Y,\normdot)}$ be dual Banach spaces, with
$\dual{(Y,\normdot)}$ strictly convex. Further, let $\trinormdot$ be
a coarser, $\weakstar$-lower semicontinuous seminorm on $\dual{X}$,
$\mapping{T}{\dual{X}}{\dual{Y}}$ a bounded, linear operator and set
$Z = \closure{\dual{T}\dual{Y}}^{\trinormdot} \subseteq \dual{X}$.
Then there exists an equivalent dual norm $\moddot$ on $X$, such
that whenever
$$
f \in Z,\;f^\prime \in X \quad\mbox{and}\quad |f| = |f^\prime| =
\textstyle{\frac{1}{2}}|f + f^\prime|
$$
we have $\trinorm{f - f^\prime} = 0$.
\end{prop}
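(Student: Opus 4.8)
The plan is to adapt the classical transfer device for rotund renormings (cf.\ \cite[Theorem II.2.1]{dgz:93}), but to route it \emph{entirely} through the strictly convex norm of $\dual{Y}$ by means of a scaled family of infimal-convolution seminorms. Let $\normdot$ denote the dual norms of both $X$ and $Y$, the latter strictly convex, and write $\mapping{\dual{T}}{\dual{Y}}{\dual{X}}$ for the adjoint appearing in $Z=\closure{\dual{T}\dual{Y}}^{\trinormdot}$; being an adjoint it is $\weakstar$-to-$\weakstar$ continuous. For each $m\in\nat$ I would set
\[
\Theta_m(h)^2 \;=\; \inf\setcomp{\trinorm{h-\dual{T}\psi}^2 + m^{-1}\norm{\psi}^2}{\psi\in\dual{Y}}
\]
and define a functional on $\dual{X}$ by
\[
|h|^2 \;=\; \norm{h}^2 + \sum_m 2^{-m}\Theta_m(h)^2.
\]
Since $\trinormdot$ is coarser than $\normdot$, taking $\psi=0$ gives $\Theta_m(h)\leq\trinorm{h}\leq C\norm{h}$, so $\moddot$ is equivalent to $\normdot$, while the leading term $\norm{h}^2$ makes it a genuine norm. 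The norm on $X$ of the statement is then the predual of this dual norm $\moddot$.

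First I would check that $\moddot$ is a dual norm, i.e.\ $\weakstar$-lower semicontinuous; by the summation it suffices to treat each $\Theta_m$. Because the penalising term $m^{-1}\norm{\psi}^2$ is coercive, the infimum defining $\Theta_m(h)$ ranges effectively over a $\weakstar$-compact ball of $\dual{Y}$ and is attained (Banach--Alaoglu). A routine net argument then shows each sublevel set $\setcomp{h}{\Theta_m(h)\leq c}$ to be $\weakstar$-closed: if $h_\gamma = u_\gamma + \dual{T}\psi_\gamma$ with $\trinorm{u_\gamma}^2 + m^{-1}\norm{\psi_\gamma}^2\leq c^2$ and $h_\gamma$ converges to $h$ in the $\weakstar$-topology, pass to a subnet along which $\psi_\gamma$ converges $\weakstar$ to some $\psi$; then $u_\gamma$ converges $\weakstar$ to $h-\dual{T}\psi$, and $\weakstar$-lower semicontinuity of $\trinormdot$ and $\normdot$ (together with superadditivity of $\liminf$) puts $(h-\dual{T}\psi,\psi)$ back into the constraint set. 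This is the analogue of the lower-semicontinuity check in the proof of Theorem \ref{dualrotund}.

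Next, suppose $f\in Z$, $f^\prime\in\dual{X}$ and $|f|=|f^\prime|=\frac{1}{2}|f+f^\prime|$. The standard convexity fact (cf.\ \cite[Fact II.2.3]{dgz:93}), applied to the $\ell_2$-type combination defining $\moddot$ as an equality case of Cauchy--Schwarz, yields $\Theta_m(f)=\Theta_m(f^\prime)=\frac{1}{2}\Theta_m(f+f^\prime)$ for every $m$. Fixing $m$ and letting $\psi_f,\psi_{f^\prime}$ attain the respective infima, I would run the same argument \emph{inside} $\Theta_m$: the optimal vectors $(f-\dual{T}\psi_f,\psi_f)$ and $(f^\prime-\dual{T}\psi_{f^\prime},\psi_{f^\prime})$ satisfy the equality case of the triangle inequality for the $\ell_2$-sum of $\trinormdot$ and $m^{-1/2}\normdot$, and strict convexity of the norm of $\dual{Y}$ in the second coordinate forces $\psi_f=\psi_{f^\prime}=:\psi^{(m)}$, whence $\trinorm{f-\dual{T}\psi^{(m)}}=\trinorm{f^\prime-\dual{T}\psi^{(m)}}\leq\Theta_m(f)$.

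Finally I would exploit membership of $Z$. Since $f\in Z=\closure{\dual{T}\dual{Y}}^{\trinormdot}$, choosing $\psi_k$ with $\trinorm{f-\dual{T}\psi_k}\to 0$ gives $\limsup_m\Theta_m(f)^2\leq\trinorm{f-\dual{T}\psi_k}^2$ for each $k$, so $\Theta_m(f)\to 0$ as $m\to\infty$. Therefore
\[
\trinorm{f-f^\prime} \;\leq\; \trinorm{f-\dual{T}\psi^{(m)}} + \trinorm{\dual{T}\psi^{(m)}-f^\prime} \;\leq\; 2\Theta_m(f) \;\longrightarrow\; 0,
\]
giving $\trinorm{f-f^\prime}=0$, as required. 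The conceptual obstacle, and the reason a single transfer norm is inadequate, is that rotundity is available only on $\dual{Y}$ whereas $\dual{T}\dual{Y}$ is merely $\trinormdot$-dense in $Z$: one transfer step reproduces an equality in the non-rotund seminorm $\trinormdot$ and then stalls. The scaling factor $m^{-1}$ together with the summation over $m$ is precisely what converts this density into the vanishing of $\trinorm{f-f^\prime}$, and the point requiring care is the $\weakstar$-lower semicontinuity, and consequent attainment, of the seminorms $\Theta_m$.
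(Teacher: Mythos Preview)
Your proposal is correct and follows essentially the same route as the paper: the seminorms $\Theta_m$ coincide with the paper's $|\cdot|_n$ (the paper's displayed definition omits the squares on the right, but equation (\ref{eqn4}) and the subsequent argument confirm the intended form is exactly yours), the equivalent norm is the same $\ell_2$-summation, and the chain of convexity equalities, attainment, strict convexity of $\dual{Y}$, and the use of $f\in Z$ to force $\Theta_m(f)\to 0$ matches the paper step for step. If anything, your treatment of $\weakstar$-lower semicontinuity via a compactness/net argument is slightly more explicit than the paper's one-line appeal to compactness or Hahn--Banach.
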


\begin{proof}
Define seminorms $|\cdot|_n$ on $\dual{X}$ by
$$
|f|^2_n \;=\; \inf\setcomp{\trinorm{f - \dual{T}g} +
n^{-1}\norm{g}}{g \in \dual{Y}}
$$
and set $|f|^2 = \norm{f}^2 + \sum_{n \geq 1}2^{-n}|f|^2_n$. Since
$\trinormdot$ is coarser than $\normdot$, our new norm $\moddot$ is
equivalent to $\normdot$. As in Theorem \ref{dualrotund}, by a
$\weakstar$-compactness argument or the Hahn-Banach Theorem,
$|\cdot|_n$ is a $\weakstar$-lower semicontinuous seminorm, and the
infimum in the definition is attained. Now let $f$ and $f^\prime$
satisfy the above hypothesis. By convexity arguments and infimum
attainment, we can take $g_n, g^\prime_n \in \dual{Y}$ such that
\begin{equation}
\label{eqn4} |f|^2_n \;=\; \trinorm{f - \dual{T}g_n}^2 +
n^{-1}\norm{g_n}^2,
\end{equation}
\begin{equation}
\label{eqn5} \trinorm{f - \dual{T}g_n} \;=\; \trinorm{f^\prime -
\dual{T}g^\prime_n}
\end{equation} and
$$
\norm{g_n} \;=\; \norm{g^\prime_n} \;=\;
\textstyle{\frac{1}{2}}\norm{g_n + g^\prime_n}.
$$
The last equation tells us that $g_n = g^\prime_n$ for all $n$,
meaning that we have
$$
\trinorm{f - f^\prime} \;\leq\; \trinorm{f - \dual{T}g_n} +
\trinorm{f^\prime - \dual{T}g^\prime_n}
$$
Since $f \in Z$, we have $|f|_n \rightarrow 0$, so by equations
(\ref{eqn4}) and (\ref{eqn5}), this leads to $\trinorm{f^\prime -
\dual{T}g^\prime_n} = \trinorm{f - \dual{T}g_n} \rightarrow 0$,
giving $\trinorm{f - f^\prime} = 0$ as required.
\end{proof}

Using this, we can obtain our general renorming result.

\begin{prop}
\label{coarsegruenhage} Let $(X,\normdot)$ be a Banach space, $F
\subseteq \dual{X}$ a subspace and $\trinormdot$ a coarser norm on
$X$, such that $F \cap \dual{(X,\trinormdot)}$ separates points of
$X$. Further, let $K \subseteq X$ be a Gruenhage compact in the
$\sigma(X,F)$-topology and suppose $X =
\closure{\aspan}^{\trinormdot}(K)$. Then:
\begin{enumerate}
\item there is a coarser, $\sigma(X,F)$-lower semicontinuous, strictly convex norm $\moddot$ on $X$;
\item $X$ admits an equivalent, strictly convex norm.
\end{enumerate}
Moreover, if $F$ is a norming subspace then $\moddot$ is equivalent
to $\normdot$.
\end{prop}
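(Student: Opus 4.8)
The plan is to reduce to a space of the form $\Ck K$, where Theorem \ref{dualrotund} supplies strict convexity, and then transport that strict convexity back to $X$ through the natural restriction map, using Proposition \ref{rotundtransfer}. Since $F$ separates the points of $X$, the canonical map $\mapping{j}{X}{\dual F}$, $j(x)(f) = f(x)$, is injective and is a homeomorphism of $(X,\sig{X}{F})$ onto its image in $(\dual F,\weakstar)$. Because $K$ is $\sig{X}{F}$-compact, $j(K)$ is $\weakstar$-compact, hence bounded in $\dual F$ by the uniform boundedness principle; consequently the restriction operator $\mapping{R}{F}{\Ck K}$, $Rf = f\restrict{K}$, is well-defined and bounded, and its adjoint $\mapping{\dual R}{\dual{\Ck K}}{\dual F}$ satisfies $\dual R\delta_t = j(t)$ for every $t \in K$. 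Regarding $K$ with the topology inherited from $\sig{X}{F}$ as a Gruenhage compact, Theorem \ref{dualrotund} furnishes an equivalent, strictly convex dual norm on $\dual{\Ck K}$.

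Next I would manufacture the coarser seminorm required on $\dual F$. Writing $G = F \cap \dual{(X,\trinormdot)}$, I set
$$
\ptrinorm{\phi}{*} \;=\; \sup\setcomp{\phi(g)}{g \in G,\ \norm{g}_{\dual{(X,\trinormdot)}} \leq 1} \qquad (\phi \in \dual F).
$$
Being a supremum of $\weakstar$-continuous functionals, $\ptrinormdot{*}$ is a $\weakstar$-lower semicontinuous seminorm on $\dual F$, and it is coarser than the norm of $\dual F$ because $\trinormdot$ is coarser than $\normdot$. Two further facts are decisive. First, $\ptrinorm{j(y)}{*} \leq \trinorm{y}$ for all $y \in X$, directly from the definition. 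Secondly, since $G$ separates the points of $X$, the vanishing of $\ptrinorm{j(x)}{*}$ forces $g(x) = 0$ for every $g \in G$ and hence $x = 0$; thus $\ptrinormdot{*}$ restricts to a genuine \emph{norm} on $j(X)$.

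Now I would invoke Proposition \ref{rotundtransfer}, with $\Ck K$ and $F$ in the roles of $Y$ and $X$, the strictly convex dual space being $\dual{\Ck K}$, the adjoint $\dual R$ in the role of $\dual T$, and $\ptrinormdot{*}$ as the coarser seminorm; this produces the set $Z = \closure{\dual R(\dual{\Ck K})}^{\ptrinormdot{*}}$ together with an equivalent dual norm $\moddot$ on $\dual F$. Because $j(K) = \setcomp{\dual R\delta_t}{t \in K} \subseteq \dual R(\dual{\Ck K})$, the range of $\dual R$ contains $j(\aspan(K))$; and as $X = \closure{\aspan}^{\trinormdot}(K)$ while $\ptrinorm{j(y)}{*} \leq \trinorm{y}$, we obtain $j(X) \subseteq Z$. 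Define $\moddot'$ on $X$ by $|x|' = |j(x)|$. If $|x|' = |x'|' = \frac{1}{2}|x + x'|'$ then, since $j(x),j(x') \in Z$, Proposition \ref{rotundtransfer} gives $\ptrinorm{j(x - x')}{*} = 0$, whence $x = x'$ by the norming property of $\ptrinormdot{*}$ on $j(X)$; so $\moddot'$ is strictly convex. It is coarser and $\sig{X}{F}$-lower semicontinuous, because $\moddot$ is an equivalent (hence $\weakstar$-lower semicontinuous) dual norm on $\dual F$, $j$ is $\sig{X}{F}$-to-$\weakstar$ continuous, and $\norm{j(x)}_{\dual F} \leq \norm{x}$; this proves (1). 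For (2) I would set $\norm{x}_0^2 = \norm{x}^2 + (|x|')^2$, which is equivalent to $\normdot$ since $\moddot'$ is coarser and is strictly convex by the standard convexity argument (cf.\ \cite[Fact II.2.3]{dgz:93}), as $\moddot'$ is already strictly convex. Finally, if $F$ is norming then $\norm{j(x)}_{\dual F}$ is equivalent to $\norm{x}$, so $\moddot'$ is itself equivalent to $\normdot$, giving the last assertion.

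The main obstacle is the construction and verification of $\ptrinormdot{*}$: it must simultaneously be $\weakstar$-lower semicontinuous on $\dual F$ (so that the transferred norm is $\sig{X}{F}$-lower semicontinuous), be dominated by $\trinormdot$ after composition with $j$ (so that $j(X)$ lands inside $Z$), and restrict to a genuine norm on $j(X)$ (so that the conclusion $\ptrinorm{f - f'}{*} = 0$ of Proposition \ref{rotundtransfer} can be upgraded to equality). The hypothesis that $F \cap \dual{(X,\trinormdot)}$ separates points is precisely what secures the last of these three requirements.
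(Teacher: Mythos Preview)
Your proposal is correct and follows essentially the same route as the paper: embed $X$ into $\dual F$ via evaluation, view $K$ there as a $\weakstar$-compact set, apply Theorem \ref{dualrotund} to $\dual{\Ck K}$, and transfer via Proposition \ref{rotundtransfer} using the adjoint of the restriction map $F\to\Ck K$ together with the coarser seminorm on $\dual F$ induced by the $\trinormdot$-dual ball of $G=F\cap\dual{(X,\trinormdot)}$. The paper's seminorm $\ptrinormdot{1}$ is your $\ptrinormdot{*}$, its operator $S$ is your $\dual R$, and the separating hypothesis on $G$ is used identically; the only cosmetic difference is that the paper takes $\normdot+\moddot$ for part (2) rather than your quadratic sum.
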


\begin{proof}
Since $F$ is separating, we can identify
$((X,\normdot),\sigma(X,F))$ as a topological subspace of
$(\dual{(F,\normdot)},\weakstar)$ by standard evaluation and
consider $K$ as a $\weakstar$-compact subset of $\dual{F}$. Now
elements of $F$ act as continuous functions on $(K,\weakstar)$ and
the map $\mapping{S}{\dual{\Ck{K}}}{\dual{F}}$, given by $(S\mu)(f)
= \lint{K}{}{f}{\mu}$, is a dual operator. Let $\trinormdot$ also
denote the canonical norm on $G = \dual{(X,\trinormdot)}$, and
define the $\weakstar$-lower semicontinuous seminorm
$$
\ptrinorm{\xi}{1} \;=\; \sup\setcomp{\xi(f)}{f \in F \mbox{ and
}\trinorm{f} \leq 1}
$$
on $\dual{F}$. By Proposition \ref{rotundtransfer}, there exists an
equivalent, dual norm $\moddot_1$ on $\dual{F}$, such that if
$$
\xi \in \closure{S\dual{\Ck{K}}}^{\ptrinormdot{1}},\; \xi^\prime \in
\dual{F} \quad\mbox{and}\quad |\xi|_1 \;=\; |\xi^\prime|_1 \;=\;
\textstyle{\frac{1}{2}}|\xi + \xi^\prime|_1
$$
then $\ptrinorm{\xi - \xi^\prime}{1} = 0$.

Let $\moddot$ be the restriction of $|\cdot|_1$ to $X$ and note that
$\moddot$ is both $\sigma(X,F)$-lower semicontinuous and coarser
than $\normdot$. Moreover, $X = \closure{\aspan}^{\trinormdot}(K)
\subseteq \closure{S\dual{\Ck{K}}}^{\ptrinormdot{1}}$. Therefore,
whenever $|x| = |x^\prime| = \frac{1}{2}|x + x^\prime|$, we have
$\ptrinorm{x - x^\prime}{1} = 0$. Since $F \cap G$ separates points
of $X$, it follows that $x^\prime = x$. This gives (1). For (2),
observe that the sum $\normdot + \moddot$ is an equivalent, strictly
convex norm on $X$. Finally, if $F$ is norming then $\moddot$ is
equivalent to $\normdot$.
\end{proof}

Let us assume that the coarser norm $\trinormdot$ of Proposition
\ref{coarsegruenhage} is $\sigma(X,F)$-lower semicontinuous. By a
standard polar argument
$$
\trinorm{x} \;=\; \sup\setcomp{f(x)}{f \in F,\mbox{ }\trinorm{f}
\leq 1}
$$
and, in particular, $F \cap \dual{(X,\trinormdot)}$ separates points
of $X$.

\begin{cor}
\label{dualgrugen} Let $X$ be a Banach space and $\dual{X} =
\overline{\aspan}^{\trinormdot}(K)$, where $K$ is a Gruenhage
compact in the $\weakstar$-topology and $\trinormdot$ is equivalent
to a coarser, $\weakstar$-lower semicontinuous norm on $\dual{X}$.
Then $\dual{X}$ admits an equivalent, strictly convex dual norm.
\end{cor}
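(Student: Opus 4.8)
The plan is to apply Proposition \ref{coarsegruenhage} with the dual space $\dual{X}$ playing the role of the ambient Banach space and its predual $X$ playing the role of the norming subspace $F$. I would regard $X$ as a subspace of $\bidual{X} = \dual{(\dual{X})}$ via the canonical embedding; then $X$ is $1$-norming for $\dual{X}$, and the topology $\sig{\dual{X}}{X}$ is precisely the $\weakstar$-topology. Before invoking the proposition, I would replace $\trinormdot$ by the coarser, $\weakstar$-lower semicontinuous norm to which it is equivalent, calling this norm $\pnormdot{0}$. As $\pnormdot{0}$ and $\trinormdot$ are equivalent, their closures of $\aspan(K)$ coincide, so the hypothesis $\dual{X} = \closure{\aspan}^{\trinormdot}(K)$ becomes $\dual{X} = \closure{\aspan}^{\pnormdot{0}}(K)$.

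Next I would verify each hypothesis of Proposition \ref{coarsegruenhage} in turn. The norm $\pnormdot{0}$ is coarser than the given dual norm on $\dual{X}$, and it is $\sig{\dual{X}}{X}$-lower semicontinuous by construction; by the remark following Proposition \ref{coarsegruenhage}, this automatically guarantees that $X \cap \dual{(\dual{X},\pnormdot{0})}$ separates the points of $\dual{X}$. The set $K$ is Gruenhage compact in $\sig{\dual{X}}{X}$, which is the hypothesised $\weakstar$-topology, and we have just arranged that $\dual{X} = \closure{\aspan}^{\pnormdot{0}}(K)$. Thus all the hypotheses are in place, and the proposition furnishes a coarser, $\sig{\dual{X}}{X}$-lower semicontinuous, strictly convex norm $\moddot$ on $\dual{X}$.

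The one point deserving real attention, rather than being a genuine obstacle, is how to upgrade this to an \emph{equivalent dual} norm. Here the norming role of the predual $X$ does the work: since $X$ is a norming subspace of $\dual{(\dual{X})}$, the \emph{moreover} clause of Proposition \ref{coarsegruenhage} tells us that $\moddot$ is in fact equivalent to the original norm on $\dual{X}$. Finally, a norm on a dual space that is both equivalent to the dual norm and $\weakstar$-lower semicontinuous is itself a dual norm, because $\weakstar$-lower semicontinuity is exactly the $\weakstar$-closedness of its unit ball. Hence $\moddot$ is the desired equivalent, strictly convex dual norm on $\dual{X}$, and everything else in the argument is bookkeeping that identifies the correct instance of Proposition \ref{coarsegruenhage}.
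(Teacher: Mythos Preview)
Your proposal is correct and matches the paper's intended argument: the corollary is stated immediately after Proposition \ref{coarsegruenhage} and the remark on $\sigma(X,F)$-lower semicontinuous coarser norms, with no separate proof, precisely because it is the special case $F = X \subseteq \bidual{X}$ that you describe. Your handling of the passage from $\trinormdot$ to the equivalent $\weakstar$-lower semicontinuous norm, the invocation of the remark for point separation, and the use of the ``moreover'' clause to obtain an equivalent (hence dual) norm are exactly what the paper has in mind.
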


The result above applies to all established classes of Banach spaces
known to admit equivalent strictly convex dual norms on their dual
spaces; for example, Va\v{s}\'{a}k spaces. We move on to discuss a
property of Banach spaces, introduced in \cite{fmz:04} and shown
there to be a sufficient condition for the existence of an
equivalent, strictly convex dual norm.

\begin{defn}[\cite{fmz:04}]
\label{propertyg} We say that the Banach space $X$ has property G if
there exists a bounded set $\Gamma = \bigcup_{n \in \nat} \Gamma_n
\subseteq X$, with the property that whenever $f,g \in
\ball{\dual{X}}$ are distinct, there exist $n \in \nat$ and $\gamma
\in \Gamma_n$ such that $(f - g)(\gamma) \neq 0$ and, either
$|f(\gamma^\prime)|
> \frac{1}{4}|(f-g)(\gamma)|$ for finitely many $\gamma^\prime \in
\Gamma_n$, or $|g(\gamma^\prime)| > \frac{1}{4}|(f-g)(\gamma)|$ for
finitely many $\gamma^\prime \in \Gamma_n$.
\end{defn}

As well as showing that all Va\v{s}\'{a}k spaces possess property G,
the authors of \cite{fmz:04} remark that the property is closely
related to Gruenhage compacta.

\begin{prop}
\label{gimpliesdualgru} If $X$ has property G then the dual unit
ball $\ball{\dual{X}}$ is a Gruenhage compact in the
$\weakstar$-topology.
\end{prop}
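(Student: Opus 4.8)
The plan is to read off countably many families of \weakstar-open subsets of $\ball{\dual{X}}$ from the given decomposition $\Gamma = \bigcup_n \Gamma_n$ and to verify Gruenhage's condition (Definition \ref{gruenhage}) by hand. Since each evaluation $h \mapsto h(\gamma')$ is \weakstar-continuous, for every $n \in \nat$, every positive rational $q$ and every sign $\epsilon \in \{-1,1\}$ the set $U_{\gamma',q,\epsilon} = \setcomp{h \in \ball{\dual{X}}}{\epsilon\, h(\gamma') > q}$ is relatively \weakstar-open, so I would work with the families $\mathscr{U}_{n,q,\epsilon} = \setcomp{U_{\gamma',q,\epsilon}}{\gamma' \in \Gamma_n}$. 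As the index set $\nat \times (\rat \cap (0,\infty)) \times \{-1,1\}$ is countable, these can be re-enumerated as a single sequence, and $\ball{\dual{X}}$ is \weakstar-compact by the Banach--Alaoglu theorem, so it only remains to verify the separation property.

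Given distinct $f, g \in \ball{\dual{X}}$, I would apply property G to obtain $n$ and $\gamma \in \Gamma_n$ with $(f-g)(\gamma) \neq 0$; write $c = \frac14|(f-g)(\gamma)| > 0$ and assume the first alternative, that $|f(\gamma')| > c$ holds for only finitely many $\gamma' \in \Gamma_n$ (the second is symmetric). The finiteness part of Gruenhage's condition is then automatic for any threshold $q \geq c$: if $f \in U_{\gamma',q,\epsilon}$ then $|f(\gamma')| \geq \epsilon f(\gamma') > q \geq c$, so $f$ lies in only finitely many members of $\mathscr{U}_{n,q,\epsilon}$, whatever $\epsilon$ may be. Thus everything hinges on producing a single member of such a family that isolates exactly one of $f, g$ while keeping $q \geq c$.

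This last step is the one I expect to be the main obstacle, because a symmetric threshold on $|h(\gamma)|$ can fail to separate $f$ and $g$ (for instance when $f(\gamma) = -g(\gamma)$), which is precisely why the signed sets $U_{\gamma',q,\epsilon}$ are needed. The device I would use is to choose the sign by the midpoint: put $\epsilon = \sgn(f(\gamma) + g(\gamma))$, with the convention $\sgn 0 = 1$, so that $\epsilon f(\gamma)$ and $\epsilon g(\gamma)$ have non-negative average $\epsilon m \geq 0$ and differ by $4c$ in modulus; hence $\{\epsilon f(\gamma), \epsilon g(\gamma)\} = \{\epsilon m + 2c,\, \epsilon m - 2c\}$, whose larger value satisfies $\epsilon m + 2c \geq 2c > c$. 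Any rational $q$ with $\max(c, \epsilon m - 2c) < q < \epsilon m + 2c$ --- an interval that is non-empty since $\epsilon m + c > 0$ --- then has $q > c$ and places exactly the upper of $f, g$ inside $U_{\gamma,q,\epsilon}$, furnishing the required singleton. Combined with the automatic finiteness above, this shows that $\ball{\dual{X}}$ is a Gruenhage compact in the \weakstar-topology.
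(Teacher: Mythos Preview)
Your proof is correct and follows essentially the same route as the paper: both use the \weakstar-open slices $\{h:\epsilon\,h(\gamma)>q\}$ (the paper writes these as $U(\gamma,q)$ and $-U(\gamma,q)$), indexed by $\gamma\in\Gamma_n$, a sign, and a rational threshold, and verify Definition~\ref{gruenhage} directly from property~G. The only difference is bookkeeping in the case analysis---you pick the sign via the midpoint $\sgn(f(\gamma)+g(\gamma))$, whereas the paper first reduces by symmetry to $|f(\gamma)|>\alpha$ and $f(\gamma)>0$---but the mechanism is identical.
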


\begin{proof}
We can and do assume that $\Gamma$ is a subset of the unit ball
$\ball{X}$. Given $\gamma \in \Gamma$ and $q \in (0,1) \cap \rat$,
we let $U(\gamma,q) = \setcomp{f \in \ball{\dual{X}}}{f(\gamma) >
q}$. We prove that, together, $(\mathscr{U}_{n,q})$ and
$(\mathscr{V}_{n,q})$, $n \in \nat$ and $q \in (0,1) \cap \rat$,
satisfy (1) and (2) of Definition \ref{gruenhage}, where
$\mathscr{U}_{n,q} = \setcomp{U(\gamma,q)}{\gamma \in \Gamma_n}$ and
$\mathscr{V}_{n,q} = \setcomp{-U(\gamma,q)}{\gamma \in \Gamma_n}$.
Given distinct $f,g \in \ball{\dual{X}}$, take $\gamma \in \Gamma_n$
with the property that $\alpha = \frac{1}{4}|(f-g)(\gamma)| > 0$. It
follows that either $|f(\gamma)|
> \alpha$ or $|g(\gamma)| > \alpha$; without loss of
generality, we assume that the former inequality holds. Now suppose
that $f(\gamma) > 0$. We choose rational $q$ to satisfy $f(\gamma) >
q
> \max \{g(\gamma),\alpha \}$ if $f(\gamma) >
g(\gamma)$, or $g(\gamma) > q > f(\gamma)$ otherwise. Either way,
$U(\gamma,q) \cap \{f,g\}$ is a singleton, giving (1). Since $q
> \alpha$, (2) follows. If $f(\gamma) < 0$, we repeat the above
argument with $-f$ and $-g$.
\end{proof}

\begin{cor}[\cite{fmz:04}]
If $X$ has property G then $\dual{X}$ admits an equivalent, strictly
convex dual norm.
\end{cor}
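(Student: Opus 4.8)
The plan is to read this off as a direct composition of Proposition \ref{gimpliesdualgru} with Corollary \ref{dualgrugen}, so that essentially all of the work has already been done. First I would apply Proposition \ref{gimpliesdualgru} to $X$: since $X$ has property G, the dual unit ball $K = \ball{\dual{X}}$ is a Gruenhage compact in the $\weakstar$-topology. This is the only place where property G enters; from here on the argument is purely about a Gruenhage compact sitting inside a dual space, which is precisely the setting of Corollary \ref{dualgrugen}.

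Next I would verify the hypotheses of Corollary \ref{dualgrugen} for this particular $K$. The key observation is that $K$ is the \emph{whole} dual unit ball, so its linear span is already all of $\dual{X}$; consequently the equality $\dual{X} = \overline{\aspan}^{\trinormdot}(K)$ holds trivially for \emph{any} norm $\trinormdot$ on $\dual{X}$, since the span requires no closure at all. I would therefore take $\trinormdot$ to be the given dual norm $\normdot$ on $\dual{X}$. Every dual norm is $\weakstar$-lower semicontinuous, and $\normdot$ is (trivially) equivalent to itself, so it satisfies the requirement of being equivalent to a coarser, $\weakstar$-lower semicontinuous norm in the sense of Corollary \ref{dualgrugen}. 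With $K$ Gruenhage compact in the $\weakstar$-topology and these choices in place, Corollary \ref{dualgrugen} would immediately deliver an equivalent, strictly convex dual norm on $\dual{X}$.

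I do not expect a hard step here: all the real content lives in Theorem \ref{dualrotund} and its transfer consequences, which Corollary \ref{dualgrugen} already packages. The only point that warrants a moment's care is confirming that the given dual norm legitimately plays the role of $\trinormdot$---that is, that ``equivalent to a coarser, $\weakstar$-lower semicontinuous norm'' is met by $\normdot$ itself. This is immediate, since a dual norm is automatically $\weakstar$-lower semicontinuous and is equivalent (indeed equal) to itself, and ``coarser'' is understood non-strictly, as domination by a multiple of the ambient norm. Hence the corollary would apply and the result would follow at once.
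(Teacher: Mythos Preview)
Your proposal is correct and matches the paper's own proof exactly: the paper simply says ``Combine Proposition \ref{gimpliesdualgru} and Corollary \ref{dualgrugen}.'' Your additional verification that the dual norm itself serves as the required $\trinormdot$ is accurate and fills in the details the paper leaves implicit.
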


\begin{proof}
Combine Proposition \ref{gimpliesdualgru} and Corollary
\ref{dualgrugen}.
\end{proof}

We finish this section with an open problem.

\begin{prob}
If $\dual{\Ck{K}}$ admits a strictly convex dual norm then is $K$
Gruenhage? More ambitiously, if $\dual{X}$ is a dual Banach space
with strictly convex dual norm, is $\ball{\dual{X}}$ Gruenhage?
\end{prob}

\section{A topological characterisation of $Y$-embeddable trees}

In this section, we present a partial converse to Theorem
\ref{dualrotund}. We call a partially ordered set
$(\Upsilon,\preccurlyeq)$ a \textit{tree} if, for each $t \in
\Upsilon$, the set $(0,t] = \setcomp{s \in \Upsilon}{s \preccurlyeq
t}$ of \textit{predecessors} of $t$ is well-ordered. Given $t \in
\Upsilon$, we denote by $t^+$ the set of \textit{immediate
successors} of $t$ in $\Upsilon$; that is, $u \in t^+$ if and only
if $t \prec u$ and $t \prec \xi \prec u$ for no $\xi$. The locally
compact, scattered \textit{order topology} on $\Upsilon$ takes as a
basis the sets $(s,t]$, $s \prec t$, where $(s,t] =
(0,t]\backslash(0,s]$. To ensure that this topology is also
Hausdorff, we demand that every non-empty, totally ordered subset of
$\Upsilon$ has at most one minimal upper bound; trees satisfying
this property are themselves called \textit{Hausdorff}. We study the
space $\Czerok{\Upsilon}$ of continuous, real-valued functions on
$\Upsilon$ that vanish at infinity, and the dual space of measures.
To date, most of the results about renorming $\Czerok{\Upsilon}$ and
its dual have been order-theoretic in character: \cite{haydon:99},
\cite{smith:05b} and \cite{smith:06}. Such order-theoretic results,
while well-suited in this context, are deeply bound to the
tree-structure and, as such, do not offer obvious generalisations.
Here, we are able to give a purely topological characterisation of
trees $\Upsilon$, such that $\dual{\Czerok{\Upsilon}}$ admits an
equivalent, strictly convex dual norm. The following definition
first appears in \cite{smith:05b}.

\begin{defn}
\label{ordery} Let $Y$ be the set of all strictly increasing,
continuous, transfinite sequences $x = (x_\alpha)_{\alpha \leq
\beta}$ of real numbers, where $0 \leq \beta < \wone$. We order $Y$
by declaring that $x < y$ if and only if either $y$ strictly extends
$x$, or if there is some ordinal $\alpha$ such that $x_\xi = y_\xi$
for $\xi < \alpha$ and $y_\alpha < x_\alpha$.
\end{defn}

We say that a map $\mapping{\rho}{\Upsilon}{\Sigma}$ from a tree to
a linear order is \textit{increasing} if $\rho(s) \leq \rho(t)$
whenever $s \prec t$, and strictly so if the former inequality is
always strict. The next theorem is the key result of this section.

\begin{thm}
\label{yimpliesgru} If $\Upsilon$ is a tree and
$\mapping{\rho}{\Upsilon}{Y}$ is a strictly increasing function then
$\Upsilon$ is a Gruenhage space.
\end{thm}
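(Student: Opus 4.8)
The plan is to verify Definition~\ref{gruenhage} directly, taking the separating open sets from the order topology and using the real coordinates furnished by $\rho$ only to organise them into countably many families; throughout write $\rho(t)=(\rho(t)_\alpha)_{\alpha\le\beta(t)}$. Property~(1) is cheap. Since $\Upsilon$ is Hausdorff, the basic intervals $(s,t]$ and the down-sets $(0,t]$ are clopen, the minimal nodes are isolated, and for any node $w$ and any immediate successor $v\in w^+$ the cone $C_v=\{u\in\Upsilon:v\preccurlyeq u\}$ is clopen (a basic neighbourhood $(w,u]$ of any $u\succcurlyeq v$ lies inside $C_v$). Comparable points $s\prec t$ are separated by $C_v$, where $v$ is the least element of the well-ordered chain $(s,t]$, an immediate successor of $s$: then $t\in C_v$ but $s\notin C_v$. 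For incomparable $s,t$ the common predecessors $(0,s]\cap(0,t]$ form a chain which, by the Hausdorff hypothesis, has a maximum $w$; then $s,t$ lie above distinct $s_0,t_0\in w^+$, and $C_{s_0}$ contains $s$ but not $t$. So every pair of distinct points is separated by a clopen cone $C_v$ over a successor node $v$ (pairs of minimal nodes being separated by an auxiliary family of the isolating singletons), and the whole difficulty is to distribute these cones among countably many families so that property~(2) holds.

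The mechanism I would use rests on one finiteness principle: a strictly increasing sequence of reals meets a fixed rational $q$ at most once. To each successor node $v$, with immediate predecessor $w$, I attach a rational $q(v)$ certifying the strict inequality $\rho(w)<\rho(v)$ in $Y$. In the extension case I take $q(v)$ to be a rational lying in a gap of $\rho(v)$ above the final value $\rho(w)_{\beta(w)}$; in the first-difference case, say at the coordinate $\alpha$ where $\rho(v)_\alpha<\rho(w)_\alpha$, I take $q(v)\in(\rho(v)_\alpha,\rho(w)_\alpha)\cap\rat$. Setting $\mathscr{U}_q=\{C_v:q(v)=q\}$ for each $q\in\rat$ then produces countably many families of clopen sets whose union separates points, by the previous paragraph.

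It remains to establish property~(2); since property~(2) holds at once as soon as every point lies in only finitely many members of each $\mathscr{U}_q$, the theorem reduces to the following claim: for each rational $q$ and each $u\in\Upsilon$, only finitely many predecessors $v\preccurlyeq u$ satisfy $q(v)=q$. The intended reading is that each such $v$ records a passage of the value $q$ along the branch $(0,u]$, and that a branch can pass a fixed value only finitely often.

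Proving this claim is the crux, and it is precisely where the two clauses of the $Y$-order interact. The danger is that, as one ascends the tree, the coordinate values of $\rho$ may \emph{decrease} (a ``dip'' in the first-difference case), so a~priori a fixed rational could be certified infinitely often along a single chain. I would control this by attaching to each node its $q$-straddle, the pair $\big(\sup\{\rho(\cdot)_\alpha:\rho(\cdot)_\alpha<q\},\ \inf\{\rho(\cdot)_\alpha:\rho(\cdot)_\alpha\ge q\}\big)$, and showing that this straddle varies monotonically along $(0,u]$ with respect to the $Y$-order, that a $q$-certified step is exactly a change of straddle, and that strict $Y$-increase together with the continuity of the sequences forces the straddle to stabilise after finitely many steps — the at-most-once-crossing principle applied coherently along the branch. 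Once this monotonicity and finiteness are in place, the residual points — openness (already noted), the bookkeeping that the separating cone of each pair lies in the family indexed by its certifying rational, and the trivial treatment of minimal nodes — are routine.
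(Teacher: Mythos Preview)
Your direct attack via successor cones $C_v$ is natural, but the finiteness claim at its heart is false for the labelling you describe, and the ``straddle'' mechanism does not repair it.

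Take $\Upsilon=\omega+1$ and fix reals $0<a_1<a_2<\cdots$ with supremum $a\le\tfrac12$. Set $\rho(0)=(0)$, $\rho(2k+1)=(0,a_1,\dots,a_k,1)$, $\rho(2k+2)=(0,a_1,\dots,a_{k+1})$ and $\rho(\omega)=(0,a_1,a_2,\dots;a)$ of length $\omega+1$. One checks that $\rho$ is strictly increasing into $Y$. Each step $2k\to 2k+1$ is an extension whose first new gap is $(a_k,1)$; each step $2k+1\to 2k+2$ is a first-difference at the last coordinate with interval $(a_{k+1},1)$. In both cases your rule permits $q(v)=\tfrac12$, so every successor node on the branch may carry the same rational, and $u=\omega$ lies in infinitely many members of $\mathscr{U}_{1/2}$. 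As for the straddle: the $\tfrac12$-straddles along the branch are $(0,\infty),(0,1),(a_1,\infty),(a_1,1),(a_2,\infty),(a_2,1),\dots$; the right coordinate oscillates forever between $1$ and $\infty$, so there is no monotonicity and no stabilisation. The ``at-most-once-crossing'' heuristic fails precisely because a dip can erase the coordinate that last crossed $q$, freeing a later coordinate to cross it again; this can recur indefinitely.

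The example is of course itself Gruenhage, but that is beside the point: your argument is supposed to work for \emph{any} admissible choice of certifying rationals, and the example shows it cannot. Some much more careful bookkeeping of how extensions and dips interact along a branch would be needed, and nothing in the sketch indicates what that should be.

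The paper avoids this difficulty by not working with the $Y$-order directly. It invokes the partition-tree machinery of \cite{smith:05b} (recorded here as Proposition~\ref{yimpliespartitiontree}): the $Y$-embedding is converted into an auxiliary tree $\Upsilon(\mathscr{P})$ together with a strictly increasing \emph{real}-valued function $\pi$ on it, plus the structural control in property~(2) of that proposition. The legitimate system is then built from the plateaux of the partition tree; the real-valued $\pi$ supplies the genuine ``a rational is crossed at most once along a chain'' principle that your straddle was meant to emulate, while the extra monotonicity in property~(2) handles the remaining separations inside each level of the partition tree. The passage through $\Upsilon(\mathscr{P})$ is where the oscillation inherent in the $Y$-order is tamed, and that passage is missing from your sketch.
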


Theorem \ref{dualrotund} and Theorem \ref{yimpliesgru}, together
with \cite[Proposition 7]{smith:05b} and a result from
\cite{smith:06}, allows us to present the following series of
equivalent conditions and, in particular, provides our partial
converse to Theorem \ref{dualrotund}. Observe that a locally compact
space is Gruenhage if and only if its 1-point compactification is.

\begin{cor}
If $\Upsilon$ is a tree then the following are equivalent:
\begin{enumerate}
\item $\dual{\Czerok{\Upsilon}}$ admits an equivalent, dual p-UR
norm;
\item $\dual{\Czerok{\Upsilon}}$ admits an equivalent, strictly convex dual
lattice norm;
\item $\Czerok{\Upsilon}$ admits an equivalent, G\^{a}teaux smooth
lattice norm;
\item $\dual{\Czerok{\Upsilon}}$ admits an equivalent, strictly convex dual
norm;
\item there is a strictly increasing function
$\mapping{\rho}{\Upsilon}{Y}$;
\item $\Upsilon$ is a Gruenhage space.
\end{enumerate}
\end{cor}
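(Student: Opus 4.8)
The plan is to verify Definition \ref{gruenhage} directly, using the observation that a family of open sets which is pairwise disjoint is automatically point-finite (each point lies in at most one member), so that the distributive point-finiteness condition (2) comes for free and only the $T_0$-separation (1) has to be arranged. I would index the separating families by rational parameters extracted from the real coordinates of the sequences $\rho(t)$, so that ``countably many families'' corresponds to ``countably many rationals''.

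First I would set up the coordinate machinery. For $t \in \Upsilon$ write $\rho(t) = (\rho(t)_\alpha)_{\alpha \le \beta_t}$; since this sequence is strictly increasing and continuous, for each rational $q$ strictly below $\rho(t)_{\beta_t}$ the set $\setcomp{\alpha}{\rho(t)_\alpha \le q}$ is an initial segment of ordinals closed under suprema, hence has a largest element $\mu(t,q)$. I would call the truncation $\rho(t)\wedge q = (\rho(t)_\alpha)_{\alpha \le \mu(t,q)} \in Y$, and, for fixed $q$, group the nodes that \emph{cross} $q$ (those with $\rho(t)_{\beta_t} > q$) according to the value of this truncation, letting $\mathscr{U}_q$ be the family of these groups. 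The decisive point is coherence along chains: if $s \prec t$ then $\rho(s) < \rho(t)$ in $Y$, and analysing the two cases of Definition \ref{ordery} (end-extension, versus first disagreement at some level $\gamma$) shows that $\rho(s)\wedge q = \rho(t)\wedge q$ exactly while $q$ stays below the level at which the two sequences part company. This is what I would use to prove that each group is open in the order topology and that distinct groups are disjoint, so that $\mathscr{U}_q$ is a pairwise-disjoint open family.

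For separation, take distinct comparable $x \prec y$, so $\rho(x) < \rho(y)$. If $\rho(y)$ end-extends $\rho(x)$, I would pick a rational $q$ with $\rho(x)_{\beta_x} \le q < \rho(y)_{\beta_y}$; then $x$ fails to cross $q$ while $y$ does, so the group of $\mathscr{U}_q$ containing $y$ meets $\{x,y\}$ in $\{y\}$. If instead the sequences first disagree at a level $\gamma$ with $\rho(y)_\gamma < \rho(x)_\gamma$, I would choose $q$ with $\rho(y)_\gamma \le q < \rho(x)_\gamma$; then $\rho(x)\wedge q$ stops below $\gamma$ whereas $\rho(y)\wedge q$ reaches level $\gamma$, so the truncations differ and the group containing $x$ separates the pair. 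The remaining pairs are the incomparable $x,y$ on which $\rho$ may fail to be injective (strict increase constrains $\rho$ only on chains); for these I would fall back on the order topology, separating them by an open cone $C_u = \setcomp{w \in \Upsilon}{u \prec w}$ based at the immediate successor $u$ of the meet $x \wedge y$ lying below $x$, and organise these cones into countably many pairwise-disjoint (antichain) families using rational labels from $\rho$, the genuinely $\rho$-degenerate bushy parts of the tree collapsing to isolated discrete families, which are harmless.

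The hard part will be the non-end-extension behaviour permitted by $Y$: because a strictly increasing map into $Y$ may reset to smaller real values when it branches (as the two-point example $\rho(c_0)=(5)$, $\rho(c_1)=(3)$ with $c_0 \prec c_1$ shows), no single real-valued invariant such as the final value $\rho(t)_{\beta_t}$ is monotone along chains, and one cannot first replace $\rho$ by a branch-coherent map. Consequently the openness of the truncation groups and the bookkeeping that confines everything to countably many families must go through the coherence property above rather than through a naive monotone labelling; showing that the truncation classes are genuinely open (possibly after refining each $\mathscr{U}_q$ by an additional level parameter) and that the incomparable and $\rho$-degenerate pairs are all caught by countably many disjoint families is where the real work lies.
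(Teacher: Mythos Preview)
Your proposal addresses only the implication $(5)\Rightarrow(6)$; the corollary is a cycle of six equivalences, and the remaining links (in the paper: $(6)\Rightarrow(1),(2)$ from Theorem~\ref{dualrotund}, $(1),(2)\Rightarrow(4)$ trivially, $(4)\Rightarrow(5)$ from \cite[Proposition~7]{smith:05b}, and the connection with $(3)$ from \cite{smith:06}) must at least be cited.

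For $(5)\Rightarrow(6)$ itself, your route is genuinely different from the paper's. The paper does not work directly with $\rho$: it invokes Proposition~\ref{yimpliespartitiontree} (imported from \cite{smith:05b}) to replace the $Y$-valued map by a \emph{real}-valued strictly increasing map $\pi$ on an auxiliary partition tree $\Upsilon(\mathscr{P})$, together with the monotonicity clause~(2) of that proposition, and then assembles a legitimate system on $\Upsilon$ from the plateau structure and Lemma~\ref{disjoint}. The advantage is that once $\pi$ is real-valued, rational cuts of $\pi$ produce genuine antichains in $\Upsilon(\mathscr{P})$, and the plateau machinery converts these into honest pairwise-disjoint open families in $\Upsilon$; the non-end-extension behaviour of $Y$ that you correctly flag as the hard part is absorbed entirely into the construction of $\Upsilon(\mathscr{P})$, not into the Gruenhage verification.

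Your direct approach has a concrete gap that your final paragraph anticipates but does not close: the truncation classes need not be open. On the chain $\{1,2,\ldots,\omega\}$ with $\rho(n)=(0,\tfrac{1}{n})$ and $\rho(\omega)=(-1,0)$ (which is strictly increasing in $Y$), for $q=-\tfrac12$ every $n$ has empty truncation while $\omega$ has truncation $(-1)$, so the class of $\omega$ in $\mathscr{U}_q$ is the non-open singleton $\{\omega\}$. One might hope to discard non-open classes and verify that the particular class invoked in each separation step happens to be open, but that is precisely the ``real work'' you defer, and it does not follow from the chain-coherence observation alone; your suggested refinement ``by an additional level parameter'' would need uncountably many ordinal levels unless further controlled. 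The incomparable case is also underspecified: $x\wedge y$ need not exist in a general Hausdorff tree, and even when it does, organising the cones $C_u$ into countably many pairwise-disjoint families ``using rational labels from $\rho$'' requires exactly the kind of control over $\rho$ that the partition-tree apparatus is built to supply.
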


It is proved in \cite{smith:05b} that the 1-point compactification
of a tree $\Upsilon$ is descriptive, equivalently $\sigma$-discrete,
if and only if there is a strictly increasing function
$\mapping{\rho}{\Upsilon}{\rat}$. As trees go, those that admit such
$\rat$-valued functions are relatively simple. The order $Y$ is
considerably larger than $\rat$ in order-theoretic terms; indeed,
given any ordinal $\beta < \wone$, the lexicographic product
$\real^\beta$ embeds into $Y$. Accordingly, there is an abundance of
trees that admit strictly increasing $Y$-valued maps, but not
strictly increasing $\rat$-valued maps \cite{smith:05b}. Therefore,
the class of Gruenhage compact spaces encompasses appreciably more
structure than the class of descriptive compact spaces.

A little preparatory work must be presented before giving the proof
of Theorem \ref{yimpliesgru}. We recall some material from
\cite{smith:05b}.

\begin{defn}[\cite{smith:05b}]
\label{plateau} A subset $V \subseteq \Upsilon$ is called a
\textit{plateau} if $V$ has a least element $0_V$ and $V =
\bigcup_{t \in V}[0_V,t]$. A partition $\mathscr{P}$ of $\Upsilon$
consisting solely of plateaux is called a \textit{plateau
partition}.
\end{defn}

If $V$ is a plateau then $V\backslash\{0_V\}$ is open, so given a
plateau partition $\mathscr{P}$ of $\Upsilon$, the set $H =
\setcomp{0_V}{V \in \mathscr{P}}$ \textit{of least elements of} $V$
is closed in $\Upsilon$.

\begin{defn}[\cite{smith:05b}]
\label{admissiblepartitions} Given a tree $\Upsilon$, let
$(\mathscr{P}_\beta)_{\beta < \wone}$ be a sequence of plateau
partitions with the following properties:
\begin{enumerate}
\item if $\alpha < \beta$ and $V \in
\mathscr{P}_\alpha$, $W \in \mathscr{P}_\beta$, then either $W
\subseteq V$ or $V \cap W$ is empty;
\item if $\beta$ is a limit ordinal and $W
\in \mathscr{P}_\beta$, then
$$
W = \bigcap \setcomp{V}{V \in \mathscr{P}_\alpha, \alpha < \beta, W
\subseteq V};
$$
\item if $t \in \Upsilon$, there exists $\beta < \wone$, depending
on $t$, such that $\{t\} \in \mathscr{P}_\beta$.
\end{enumerate}
We call such a sequence of plateau partitions \textit{admissible}.
\end{defn}

\begin{defn}[\cite{smith:05b}]
\label{partitiontrees} Let $(\mathscr{P}_\beta)_{\beta < \wone}$ be
admissible and let $T$ be the tree
$$
\setcomp{(\alpha,V)}{V \in \mathscr{P}_\alpha, \alpha < \wone}
$$
with order $(\alpha,V) \prec (\beta,W)$ if and only if $\alpha \leq
\beta$ and $W \subseteq V$. Then the subtree
$$
\Upsilon(\mathscr{P})\;=\;\setcomp{(\beta,V) \in T}{U \mbox{ is not
a singleton whenever }(\alpha,U) \prec (\beta,V)}
$$
of $T$ is called the \textit{partition tree of }$\Upsilon$ with
respect to $(\mathscr{P}_\beta)_{\beta < \wone}$.
\end{defn}

It is evident that if $V$ is a plateau then so is $\closure{V}$,
with $0_{\closure{V}} = 0_V$. A subset of a tree $\Upsilon$ is
called an \textit{antichain} if it consists solely of pairwise
incomparable elements. With respect to the interval topology,
antichains are discrete subsets. We make the following elementary,
yet important, observation.

\begin{lem}
\label{disjoint} Let $E$ be an antichain in a partition tree
$\Upsilon(\mathscr{P})$. If $(\alpha,V)$ and $(\beta,W)$ are
distinct elements of $E$ then both intersections $V \cap W$ and
$\closure{V}\backslash\{0_V\} \cap \closure{W}\backslash\{0_W\}$ are
empty.
\end{lem}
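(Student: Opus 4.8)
The plan is to dispose of the two intersections separately: the emptiness of $V \cap W$ is almost immediate from the antichain hypothesis, while the statement about the closures needs a short argument combining the topology with the order-convexity of plateaux.

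For $V \cap W$, I would note that since $E$ is an antichain, $(\alpha,V)$ and $(\beta,W)$ are incomparable in $T$. Assuming without loss of generality that $\alpha \leq \beta$, there are two cases. If $\alpha = \beta$ then $V$ and $W$ are distinct members of the partition $\mathscr{P}_\alpha$, hence disjoint. If $\alpha < \beta$, then property (1) of Definition \ref{admissiblepartitions} forces either $W \subseteq V$ or $V \cap W = \varnothing$; the first alternative would yield $(\alpha,V) \prec (\beta,W)$, contradicting incomparability, so $V \cap W = \varnothing$.

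For the closures I would argue by contradiction, supposing there exists $t \in (\closure{V}\backslash\{0_V\}) \cap (\closure{W}\backslash\{0_W\})$. Since $\closure{V}$ and $\closure{W}$ are again plateaux with least elements $0_V$ and $0_W$, we have $0_V \prec t$ and $0_W \prec t$, so $0_V$ and $0_W$ both lie in the well-ordered chain $(0,t]$ and are therefore comparable. A quick check (using property (1) together with the disjointness just established) rules out $0_V = 0_W$, so I may assume $0_V \prec 0_W \prec t$. The aim is then to show $0_W \in V$, which contradicts $V \cap W = \varnothing$ because $0_W \in W$.

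To exhibit $0_W \in V$ I would produce an element $v \in V$ with $0_W \preccurlyeq v$ and invoke convexity: if $t \in V$, take $v = t$; otherwise $t$ is a genuine limit point of $V$, so the basic neighbourhood $(0_W,t]$ of $t$ must meet $V$, giving some $v \in V$ with $0_W \prec v$. Either way $0_V \preccurlyeq 0_W \preccurlyeq v$ with $v \in V$, so the plateau property of $V$ (Definition \ref{plateau}) yields $0_W \in [0_V,v] \subseteq V$. I expect the only real obstacle to be this second part, where the interplay between the closure operation and the order-convexity of plateaux must be handled with care; the case distinction according to whether $t$ belongs to $V$ or is a strict limit point of $V$ is the heart of the matter, and the well-ordering of $(0,t]$ is exactly what licenses the reduction to comparable least elements.
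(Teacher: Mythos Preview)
Your proposal is correct and follows essentially the same approach as the paper's proof. The only structural difference is that the paper first passes from $W$ to the unique $U \in \mathscr{P}_\alpha$ with $W \subseteq U$ (so that $V$ and $U$ lie at the same level and intersection forces equality), whereas you work directly with $V$ and $W$; the core argument---using comparability of $0_V$ and $0_W$ below $t$, then finding a point of $V$ above $0_W$ via the basic neighbourhood $(0_W,t]$ and invoking plateau convexity---is identical in both.
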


\begin{proof}
We can assume that $\alpha \leq \beta$. That the first intersection
is empty follows directly from the definition of the partition tree
order. To see that the same is true for the second, note that if
$(\alpha,U) \preccurlyeq (\beta,W)$ then
$\closure{W}\backslash\{0_W\} \subseteq
\closure{U}\backslash\{0_U\}$, so all we need to do is prove that if
$t \in \closure{V}\backslash\{0_V\} \cap
\closure{U}\backslash\{0_U\}$ then $V$ and $U$ intersect
non-trivially and are thus equal. Given such $t$, we have that $0_V$
and $0_U$ are comparable. If $0_V \preccurlyeq 0_U$ then since there
exists $s \in (0_U,t] \cap V$, we have $0_U \in V$ as $V$ is a
plateau. Likewise, if $0_U \preccurlyeq 0_V$ then $0_V \in U$.
\end{proof}

The next result shows that if there is a strictly increasing
function $\mapping{\rho}{\Upsilon}{Y}$ then $\Upsilon$ admits a
partition tree $\Upsilon(\mathscr{P})$, on which may be defined a
strictly increasing, real-valued function. It is important to note
that the order of the partition tree is related to the order of
$\Upsilon$ through the second, albeit technical, property below. If
$t \in \Upsilon$ then the \textit{wedge} $[t,\infty)$ is the set
$\setcomp{u \in \Upsilon}{u \succcurlyeq t}$.

\begin{prop}[\cite{smith:05b}]
\label{yimpliespartitiontree} Let $\Upsilon$ be a tree. If
$\mapping{\rho}{\Upsilon}{Y}$ is strictly increasing then there
exists an admissible sequence of partitions
$(\mathscr{P}_\beta)_{\beta < \wone}$ that yields a partition tree
$\Upsilon(\mathscr{P})$, and a strictly increasing function
$\mapping{\pi}{\Upsilon(\mathscr{P})}{[0,1]}$. Moreover:
\begin{enumerate}
\item $\mathscr{P}_0 =  \setcomp{[r,\infty)}{r \in \Upsilon
\mbox{ is minimal}}$;
\item for any non-maximal $(\beta,V) \in \Upsilon(\mathscr{P})$, the map
$$
0_W \longmapsto \pi(\beta+1,W)
$$
is strictly decreasing on the subtree of least elements
$$
H_{(\beta,V)} = \setcomp{0_W}{(\beta+1,W) \in (\beta,V)^+}.
$$
\end{enumerate}
\end{prop}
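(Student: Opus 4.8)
The plan is to build the partitions $(\mathscr{P}_\beta)_{\beta<\wone}$ by transfinite recursion, reading off the coordinates of $\rho$ one index at a time, and then to read $\pi$ directly from the real values these coordinates assume. Writing $\rho(t)=(\rho_\xi(t))_{\xi\le\ell(t)}$, the decisive feature of the order on $Y$ (Definition \ref{ordery}) is this: if $s\prec t$ then $\rho(s)<\rho(t)$, so that along any chain each fixed coordinate $\rho_\xi$ is \emph{non-increasing}, and it decreases \emph{strictly} exactly at the first index where two comparable images differ. This order-reversal is the engine behind property (2).

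For the base step I set $\mathscr{P}_0=\setcomp{[r,\infty)}{r\in\Upsilon\text{ minimal}}$; since $(0,t]$ is a chain it has a unique least element, so these wedges partition $\Upsilon$ into plateaux and property (1) holds outright. Given $\mathscr{P}_\beta$, I refine each $V\in\mathscr{P}_\beta$ as follows: the points of $V$ share a common prefix $\rho(t)\restriction\beta$, and because $0_V\prec t$ forces $\rho(0_V)<\rho(t)$, the coordinate $\rho_\beta$ is not constant on $V$. I first group the points of $V$ by the value of $\rho_\beta$, and then split each group further so as to separate points lying over distinct immediate successors of a branch point. The second splitting is genuinely needed, since $\rho$ may agree on incomparable points and without it such points could never be isolated; it also does no harm to property (2), because the least elements it produces are incomparable and hence unconstrained there. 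Each resulting piece is checked to be a plateau, its least element being the point at which the new coordinate value, or the new branch, is first attained. The least elements $0_W$ of the children of $V$ form the subtree $H_{(\beta,V)}$, and the crucial point is that $0_W\mapsto\rho_\beta(0_W)$ is strictly decreasing along chains of $H_{(\beta,V)}$: comparable least elements lie in different children, so their $\rho_\beta$-values truly differ, and the $Y$-order forces the higher element to carry the smaller value.

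At a limit $\lambda$ I intersect, declaring $\mathscr{P}_\lambda$ to consist of the plateau-components of the sets $\bigcap\setcomp{V}{V\in\mathscr{P}_\alpha,\ \alpha<\lambda,\ W\subseteq V}$; together with the refinement built in at successors this supplies the properties required of an admissible sequence in Definition \ref{admissiblepartitions}. The substance here, and the step I expect to be the main obstacle, is verifying that these intersections are again plateaux indexed by genuine length-$\lambda$ prefixes. This is precisely where the \emph{continuity} of the sequences in $Y$ is used: it forces the $\lambda$-th coordinate of any prefix to equal the supremum of the earlier coordinates, so that the limiting prefix is coherent and the intersection neither degenerates nor loses its least element. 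For property (3), each $t$ has $\ell(t)<\wone$; the coordinate splittings isolate $t$ among points with equal $\rho$-value in countably many steps, while the branch splittings separate it from incomparable points carrying that same value, so $\{t\}\in\mathscr{P}_\beta$ for some countable $\beta$. Since every point is isolated at a countable level and the plateaux strictly shrink, every branch of $\Upsilon(\mathscr{P})$ is countable.

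Finally I construct $\pi$ by assigning to each node $(\beta,V)$ a subinterval $I_{(\beta,V)}=[\pi(\beta,V),b_{(\beta,V)}]$ of $[0,1]$, with $\pi(\beta,V)$ its left endpoint. Fixing an order-preserving homeomorphism $\theta\colon\real\to(0,1)$, I place each child strictly inside its parent; for instance $\pi(\beta+1,W)=\pi(\beta,V)+\theta(\rho_\beta(0_W))\,(b_{(\beta,V)}-\pi(\beta,V))$. This makes $\pi$ strictly increasing down $\Upsilon(\mathscr{P})$, and since $\theta$ is order preserving while $\rho_\beta$ decreases along chains of $H_{(\beta,V)}$, it delivers property (2). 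At a limit node the strictly increasing sequence of predecessor values has countable limit order type, so its supremum is attained by none of them and may serve as the value of $\pi$ there; the only care needed is to shrink the right endpoints $b_{(\beta,V)}$ so that non-singleton limit plateaux retain a non-degenerate interval in which to place their own children. Checking that this bookkeeping is consistent, alongside the limit-stage plateau analysis, is the technical heart of the argument, after which the monotonicity statements (1) and (2) read off from the construction.
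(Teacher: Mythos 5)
You are proving a statement that this paper does not prove at all: Proposition \ref{yimpliespartitiontree} is imported verbatim from \cite{smith:05b}, so your sketch can only be judged against the construction there and on its own merits. Your overall strategy --- transfinite refinement of plateau partitions by reading off the coordinates of $\rho$, followed by a nested-interval definition of $\pi$ --- is the right family of ideas, but as written it has a flaw that kills property (2). Your ``second splitting'' (separating points lying over distinct immediate successors of a branch point) produces, from a single $\rho_\beta$-value class $P$ with least element $0_P$, one child whose least element is $0_P$ and other children whose least elements are points $u \in 0_P^+ \cap P$. These least elements are \emph{comparable} to $0_P$ and carry the \emph{same} value of $\rho_\beta$, so your formula $\pi(\beta+1,W)=\pi(\beta,V)+\theta(\rho_\beta(0_W))\,(b_{(\beta,V)}-\pi(\beta,V))$ takes equal values at comparable points of $H_{(\beta,V)}$, and strict decrease fails; your defence that the branch-splitting ``produces incomparable least elements'' is simply false. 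The splitting is also unnecessary: since each $(0,t]$ is well-ordered, $\min$ of a chain-convex class below $t$ always exists, so the plateau components of a single value class automatically have pairwise \emph{incomparable} least elements, while across distinct classes comparability of least elements forces a genuine first difference at coordinate $\beta$ and hence a strict drop. Note also that your ``engine'' is misstated: the order on $Y$ constrains nothing beyond the first difference (take $\rho(s)=(0,1)$, $\rho(t)=(-1,5)$ with $s\prec t$: then $\rho(s)<\rho(t)$ yet the second coordinate increases), and $\rho_\beta$ can perfectly well be constant on $V$ when the increase happens by extension. The correct inductive invariant is that comparable points of one member of $\mathscr{P}_\beta$ have $\rho$-values agreeing below $\beta$ or extending one another; and you never say which class receives a point $t$ whose sequence $\rho(t)$ is too short to have a $\beta$-th coordinate --- such points need separate (top-class) treatment for (2) to survive.

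The limit stage is also mishandled, in both directions. Taking ``plateau-components'' of the intersections would violate Definition \ref{admissiblepartitions}(2), which demands that each $W \in \mathscr{P}_\lambda$ \emph{equal} the intersection of its predecessors; fortunately no components arise: if $(V_\alpha)$ is a decreasing chain of plateaux and $x$ lies in the intersection $I$, the supremum $z$ of the chain $(0_{V_\alpha})$ inside the well-ordered set $(0,x]$ is a minimal upper bound, Hausdorffness makes it independent of $x$, and $z$ is the least element of the plateau $I$ --- so this step is free tree combinatorics and does not use continuity of the $Y$-sequences, contrary to where you locate the difficulty. The genuine crux is the one you defer as ``bookkeeping'': that the nested intervals never degenerate, equivalently that every branch of $\Upsilon(\mathscr{P})$ is countable. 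Your one-line argument (``every point is isolated at a countable level and the plateaux strictly shrink'') does not suffice, because an $\wone$-indexed branch of non-singleton plateaux may have \emph{empty} intersection, shedding points forever while its least elements $0_{V_\alpha}$ strictly increase; ruling this out requires showing that such a branch would yield, via $\rho$, a strictly increasing $\wone$-chain in $Y$, and that $Y$ contains no such chain (a pressing-down argument on the first-difference coordinates, where the values at a fixed coordinate would have to decrease uncountably often). That lemma is the real content making $\Upsilon(\mathscr{P})$ admit a real-valued strictly increasing $\pi$ at all --- $\Upsilon$ itself need not --- and without it your construction of $\pi$ at limit nodes is unjustified.
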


In the proof below, we will assume the partition tree
$\Upsilon(\mathscr{P})$ and function $\pi$ from Proposition
\ref{yimpliespartitiontree}.

\begin{proof}[Proof of Theorem \ref{yimpliesgru}]
We construct a legitimate system on $\Upsilon$. As
$\Upsilon(\mathscr{P})$ admits a strictly increasing, real-valued
function $\pi$, its isolated elements may be decomposed into a
countable union of antichains $(F_n)$. Indeed, if $(\beta,W) \in
\Upsilon(\mathscr{P})$ is isolated and non-minimal, then it has an
immediate predecessor $(\alpha,V)$, and we can pick $\tau(\beta,W)
\in \rat \cap (\pi(\alpha,V), \pi(\beta,W))$. Then consider the
antichain of minimal elements, together with the fibres
$(\tau^{-1}(q))_{q \in \rat}$. If $V$ is a plateau then
$\closure{V}\backslash V$ is an antichain and hence discrete. Note
that here, closure is taken with respect to $\Upsilon$. From Lemma
\ref{disjoint}, the family
$\setcomp{\closure{V}\backslash\{0_V\}}{(\beta,V) \in F_n}$ is a
pairwise disjoint collection of open sets in $\Upsilon$. Hence $D_n
= \bigcup \setcomp{\closure{V}\backslash V}{(\beta,V) \in F_n}$ is
discrete.

Given $q \in \rat$, consider the set $E_q$ of successor elements
$(\beta+1,W) \in (\beta,V)^+$, with $(\beta,V) \in
\Upsilon(\mathscr{P})$ arbitrary, such that $\pi(\beta,V) < q <
\pi(\beta+1,W)$. Observe that $E_q$ is an antichain in
$\Upsilon(\mathscr{P})$. Indeed, if $(\alpha+1,U) \prec (\beta+1,W)$
and $(\beta+1,W) \in E_q$ then $(\alpha+1,U) \preccurlyeq (\beta,V)
\prec (\beta+1,W)$, thus $\pi(\alpha+1,U) \leq \pi(\beta,V) < q$. It
follows that $(\alpha+1,U) \notin E_q$. Given non-maximal $(\beta,V)
\in \Upsilon(\mathscr{P})$, property (2) of Proposition
\ref{yimpliespartitiontree} tells us that, in particular, the set of
relatively isolated points in the least elements $H_{(\beta,V)}$ can
be decomposed into a countable union of antichains
$(F_{(\beta,V),m})$ in $\Upsilon$. Given $(\beta+1,W) \in
(\beta,V)^+$ such that $0_W \in F_{(\beta,V),m}$, set
$$
E_{q,(\beta,V),W} \;=\; \setcomp{(\beta+1,W^\prime) \in E_q \cap
(\beta,V)^+}{0_W \preccurlyeq 0_{W^\prime}}
$$
and
$$
\mathscr{E}_{q,m} \;=\; \setcomp{E_{q,(\beta,V),W}}{(\beta+1,W) \in
(\beta,V)^+\mbox{ and } 0_W \in F_{(\beta,V),m}}.
$$
We observe that each $\mathscr{E}_{q,m}$ is a family of disjoint
subsets of $E_q$. Indeed, let $E_{q,(\beta,V),W},
E_{q,(\beta^\prime,V^\prime),W^\prime} \in \mathscr{E}_{q,m}$. If
$(\beta,V) \neq (\beta^\prime,V^\prime)$ then $(\beta,V)^+ \cap
(\beta^\prime,V^\prime)^+$ is empty and we are done, so we assume
that this is not the case. If $W \neq W^\prime$ then $0_W$ and
$0_{W^\prime}$ are incomparable in $\Upsilon$, so
$E_{q,(\beta,V),W}$ and $E_{q,(\beta^\prime,V^\prime),W^\prime}$
must be disjoint. By Lemma \ref{disjoint}, it follows that the sets
$$
J_{q,(\beta,V),W} = \bigcup \setcomp{W^\prime}{(\beta+1,W^\prime)
\in E_{q,(\beta,V),W}},
$$
$E_{q,(\beta,V),W} \in \mathscr{E}_{q,m}$, are also pairwise
disjoint.

We prove that $J = J_{q,(\beta,V),W}$ is a plateau. Evidently $0_W$
is the least element of $J$. Now suppose $t \in J$ and $0_W
\preccurlyeq s \preccurlyeq t$. We have to show that $s \in J$. As
$0_W,t \in V$ and $V$ is a plateau, $s \in V$ and so there exists
$(\beta+1,W^\prime) \in (\beta,V)^+$ such that $s \in W^\prime$. We
know that $t \in W^{\prime\prime}$, where
$(\beta+1,W^{\prime\prime}) \in E_q \cap (\beta,V)^+$ and $0_W
\preccurlyeq 0_W^{\prime\prime}$. Thus we have $0_W \preccurlyeq
0_{W^\prime} \preccurlyeq 0_{W^{\prime\prime}}$ and, by condition
(2) of Proposition \ref{yimpliespartitiontree},
$\pi(\beta+1,W^\prime) \geq \pi(\beta+1,W^{\prime\prime}) > q$. It
follows that $(\beta+1,W^\prime) \in E_q$ and $s \in J$.

At last, we have enough information to define our legitimate system.
Begin by setting $A = \Upsilon$ and $\mathscr{H} = \setcomp{\{t\}}{t
\in \Upsilon \mbox{ is isolated}}$. Then define $A_n =
\closure{D_n}$ and $\mathscr{H}_n = \setcomp{\{t\}}{t \in D_n}$.
Again using Lemma \ref{disjoint}, we are permitted to define
$A^\prime_n = \Upsilon$ and $\mathscr{H}_n^\prime =
\setcomp{V\backslash\{0_V\}}{(\beta,V) \in F_n}$. From the above
discussion, given $q \in \rat$ and $m \in \nat$, we can define
$A_{q,m} = \Upsilon$ and
$$
\mathscr{H}_{q,m} \;=\;
\setcomp{J_{q,(\beta,V),W}\backslash\{0_W\}}{E_{q,(\beta,V),W} \in
\mathscr{E}_{q,m}}.
$$

We claim that, together, the families $\mathscr{H}, \mathscr{H}_n,
\mathscr{H}_n^\prime$ and $\mathscr{H}_{q,m}$ separate points of
$\Upsilon$ in the manner of Proposition \ref{equivalences}, part
(2). Let $s,t$ be distinct elements of $\Upsilon$. If $s$ or $t$ is
an isolated point of $\Upsilon$, we can separate using
$\mathscr{H}$. Henceforth, we will assume that both $s$ and $t$ are
limit elements of $\Upsilon$. Let $V^s_\beta$ be the unique element
of $\mathscr{\beta}$ containing $s$, and likewise for $t$. Let
$\gamma < \wone$ be minimal, subject to the condition that
$V^s_\gamma \neq V^t_\gamma$. Such $\gamma$ exists by property (3)
of Definition \ref{admissiblepartitions}. By property (2) of
Definition \ref{admissiblepartitions}, $\gamma$ cannot be a limit
ordinal. If $\gamma = 0$ then $V = V^s_\gamma = [r,\infty)$ by
property (1) of Proposition \ref{yimpliespartitiontree}. Being
minimal in $\Upsilon$, $r$ is isolated, so $s \in V\backslash
\{0_V\}$. As $(0,V)$ is minimal in $\Upsilon(\mathscr{P})$, it is an
element of $F_n$ for some $n$. Consequently, we can separate $s$
from $t$ using $\mathscr{H}^\prime_n$.

We finish by tackling the case where $\gamma = \beta + 1$ for some
ordinal $\beta$. Let $W = V^s_{\beta+1}$ and $W^\prime =
V^t_{\beta+1}$. If $s \in W\backslash\{0_W\}$ then as $(\beta+1,W)$
is isolated in $\Upsilon(\mathscr{P})$, we can separate using some
$\mathscr{H}^\prime_n$ as above. We can argue similarly if $t \in
W^\prime\backslash \{0_{W^\prime}\}$ so, from now on, we assume that
$s = 0_W$ and $t = 0_{W^\prime}$, i.e.\ $s,t \in H_{(\beta,V)}$. If
$0_W$ is an immediate successor with respect to $H_{(\beta,V)}$,
i.e.\ if there exists $0_U \in H_{(\beta,V)}$ such that $0_U \prec
0_W$ and no element of $H_{(\beta,V)}$ lies strictly between the
two, then $0_W \in \closure{U}\backslash U$. Indeed, if $r \prec
0_W$ then as $0_W$ is a limit in $\Upsilon$, there exists $\xi \in
(\max\{r,0_U\},0_W]\backslash\{0_W\}$. Now $\xi$ must lie in $U$
because $0_U$ is the immediate predecessor of $0_W$ in
$H_{(\beta,V)}$. It follows that $0_W \in \closure{U}$ as required.
Now $(\beta+1,U)$ is in $F_n$ for some $n$, so $\{0_W\} \in
\mathscr{H}_n$, thus separating $0_W$ from $0_{W^\prime}$. As above,
we can argue similarly if $0_{W^\prime}$ is an immediate successor
with respect to $H_{(\beta,V)}$, so now we assume that neither $0_W$
nor $0_{W^\prime}$ are such elements. As $H_{(\beta,V)}$ has a least
element and is a Hausdorff tree in its own right, the greatest
element less than both $0_W$ and $0_{W^\prime}$ is some $0_U \in
H_{(\beta,V)}$ and, without loss of generality, we can assume that
$0_U \prec 0_W$. If $0_{U^\prime}$ is the immediate successor of
$0_U$ in $H_{(\beta,V)}$ then $0_{U^\prime} \prec 0_W$, because
$0_W$ is not such an element. Consequently, $0_{U^\prime} \in
F_{(\beta,V),m}$ for some $m$ so, given rational $q$ strictly
between $\pi(\beta,V)$ and $\pi(\beta+1,W)$, we have $0_W \in
J\backslash\{0_{U^\prime}\}$, where $J = J_{q,(\beta,V),U^\prime}$.
Since $0_{U^\prime} \not\preccurlyeq 0_{W^\prime}$ by maximality of
$0_U$, it follows that $J\backslash\{0_{U^\prime}\}$ separates $0_W$
from $0_{W^\prime}$.
\end{proof}

\section{Stability properties of Gruenhage spaces}

Our first stability property is purely topological.

\begin{thm}
\label{triumph} If $X$ is a Gruenhage space and $\mapping{f}{X}{Y}$
is a perfect, surjective mapping, then $Y$ is also Gruenhage.
\end{thm}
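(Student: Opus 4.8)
The plan is to transfer a legitimate system from $X$ to $Y$ by means of the small-image operation associated with the closed map $f$. For an open set $U \subseteq X$ write $U^{\bullet} = \setcomp{y \in Y}{f^{-1}(y) \subseteq U} = Y \setminus f(X \setminus U)$; since $f$ is closed this set is open in $Y$, and since each fibre is non-empty the operation is monotone and satisfies $(U \cap V)^{\bullet} = U^{\bullet} \cap V^{\bullet}$. By Proposition \ref{equivalences} it suffices to produce a legitimate system on $Y$, i.e.\ closed sets $B_m$ and families $\mathscr{G}_m$ of pairwise disjoint, relatively open subsets of $B_m$ whose union separates the points of $Y$; the disjointness within each $\mathscr{G}_m$ automatically supplies the distributive point-finiteness demanded by Definition \ref{gruenhage}. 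So I would start from a legitimate system $(A_n,\mathscr{H}_n)$ on $X$ as in Proposition \ref{equivalences}(2), noting that $f\restrict{A_n}$ is a perfect surjection of the closed set $A_n$ onto the closed set $B_n = f(A_n)$, so that $\bullet$ may be applied fibrewise inside each $A_n$.

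For each $n$ and each finite $\mathscr{F}\subseteq\mathscr{H}_n$ with $|\mathscr{F}|=k$, set $G(\mathscr{F}) = \setcomp{y\in B_n}{f^{-1}(y)\cap A_n \subseteq \bigcup\mathscr{F}\mbox{ and }f^{-1}(y)\cap H\neq\varnothing\mbox{ for each }H\in\mathscr{F}}$, and let $\mathscr{G}_{n,k}=\setcomp{G(\mathscr{F})}{\mathscr{F}\subseteq\mathscr{H}_n,\ |\mathscr{F}|=k}$. Because the members of $\mathscr{H}_n$ are pairwise disjoint, a fibre contained in $\bigcup\mathscr{F}$ meets exactly those members of $\mathscr{H}_n$ lying in $\mathscr{F}$; hence distinct $k$-subsets give disjoint $G(\mathscr{F})$, and the open set $(\bigcup\mathscr{F})^{\bullet}$ contains $G(\mathscr{F})$ while missing every other $G(\mathscr{F}')$. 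A short computation shows $G(\mathscr{F})$ is relatively closed in $(\bigcup\mathscr{F})^{\bullet}$ (its complement there is the union of the open sets $(\bigcup\mathscr{F}\setminus\{H\})^{\bullet}$ over $H \in \mathscr{F}$), so that $G(\mathscr{F}) = (\bigcup\mathscr{F})^{\bullet}\cap B_{n,k}$, where $B_{n,k}=\closure{\bigcup\mathscr{G}_{n,k}}$. Thus each $\mathscr{G}_{n,k}$ is a pairwise disjoint family of relatively open subsets of the closed set $B_{n,k}$, exactly as a legitimate system requires.

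It remains to check that $\bigcup_{n,k}\mathscr{G}_{n,k}$ separates the points of $Y$, and this is where the real work lies. Given distinct $y_1,y_2$ with disjoint compact fibres $K_1,K_2$, I would like a single level $n$ and a finite $\mathscr{F}\subseteq\mathscr{H}_n$ with $K_1\cap A_n$ covered by $\bigcup\mathscr{F}$ while some point of $K_2$ escapes it (or vice versa), for then $y_1\in G(\mathscr{F})$ and $y_2\notin G(\mathscr{F})$. The difficulty is that the separating family $\mathscr{H}=\bigcup_n\mathscr{H}_n$ acts one point at a time, whereas whole compact fibres must now be handled: a compact set may meet infinitely many members of a single $\mathscr{H}_n$, and the $T_0$-asymmetry of point separation means one cannot in general cover $K_1$ by members avoiding a prescribed point of $K_2$. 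I expect to resolve this by first using compactness of $K_1$ to reduce to finitely many point-separations against $K_2$, then passing to the level-combining refinement (intersections across the first $n$ families, as in the construction preceding Lemma \ref{norming}) so that these finitely many separations are realised inside one disjoint family; the asymmetry is absorbed by choosing, at each stage, the fibre whose points fall in only finitely many members of the relevant family, which is precisely what the distributive point-finiteness of $\mathscr{H}$ guarantees. Making this choice uniformly over the fibre, so that the resulting finite subfamily simultaneously covers one fibre and omits a point of the other, is the crux of the argument.
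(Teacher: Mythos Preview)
Your framework is sound and closely parallels the paper's: the small-image operation, the finite subfamilies indexed by cardinality, and the verification that $G(\mathscr{F}) = (\bigcup\mathscr{F})^{\bullet}\cap B_{n,k}$ are all correct and match the spirit of the paper's families $\mathscr{V}_{F,G,k}$. The gap is exactly where you locate it, and it is real. For $y$ to lie in any $G(\mathscr{F})$ at level $n$ one needs $f^{-1}(y)\cap A_n\subseteq\bigcup\mathscr{H}_n$; if the fibre meets the residue $A_n\setminus\bigcup\mathscr{H}_n$ it is invisible to every $\mathscr{G}_{n,k}$. Nothing in your sketch rules out both fibres meeting the residue at every level, and the refinement you propose does not help: passing to the families $\mathscr{D}_n$ of the paragraph before Lemma~\ref{norming} yields scattered partitions rather than families of relatively open sets, so sets $G(\mathscr{F})$ built from them need not be open in any closed carrier. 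The ``distributive point-finiteness'' you invoke is automatic here (each $\mathscr{H}_n$ is pairwise disjoint) and gives no extra leverage toward a uniform choice over a whole fibre.

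The paper supplies the missing idea by switching to formulation (3) of Proposition~\ref{equivalences} and introducing a second finite parameter: its separating sets have the form $Y\setminus f\bigl(X\setminus(\bigcup_{i\in F}R_i\cup\bigcup\mathscr{F})\bigr)$ with $\mathscr{F}$ drawn from an intersection-refinement $\mathscr{V}_G$. The extra union $\bigcup_{i\in F}R_i$ is precisely the device that absorbs the residue. To locate the correct $F$, the paper runs an inductive peeling of the two fibres, at each stage removing $R_n$ for the least $n$ that splits the current remainder nontrivially; the (possibly infinite) limit sets $A,B$ then enjoy the dichotomy that every $R_n$ either misses or contains $A\cup B$. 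This forces the relevant families $\mathscr{U}_n$ to partition $A\cup B$ into relatively clopen pieces, at which point the $T_0$-asymmetry evaporates and a finite cover of $A$ omitting a point of $B$ is obtained by compactness. That peeling-and-absorption mechanism is what your proposal lacks.
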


\begin{proof}
Let $X$ be a Gruenhage space and assume that we have families
$(\mathscr{U}_n)$ and sets $R_n$ satisfying Proposition
\ref{equivalences} (3). By adding new families
$\{\bigcup\mathscr{U}_n\}$ if necessary, we assume that given $n$,
there exists $m$ such that $R_m = \bigcup\mathscr{U}_n$. If $G
\subseteq \nat$ is finite, define
$$
\mathscr{V}_G \;=\; \textstyle{\setcomp{\bigcap_{i \in
G}U_i}{(U_i)_{i \in G} \in \prod_{i \in G}\mathscr{U}_i}}.
$$
Given a perfect, surjective map $\mapping{f}{X}{Y}$, we set
$$
\mathscr{V}_{F,G,k} \;=\; \textstyle{\setcomp{Y\backslash
f(X\backslash (\bigcup_{i \in F} R_i \cup
\bigcup\mathscr{F}))}{\mathscr{F} \subseteq \mathscr{V}_G \mbox{
and} \card{\mathscr{F}} = k}}
$$
for finite $F,G \subseteq \nat$ and $k \in \nat$. Since $f$ is
perfect, every element of $\mathscr{V}_{F,G,k}$ is open in $Y$.

Let $y,z \in Y$ be distinct. We show that there exists finite $F,G
\subseteq \nat$, $k \in \nat$ and $\mathscr{F} \subseteq
\mathscr{V}_G$ with cardinality $k$, such that
$$
\{y,z\} \cap \textstyle{Y\backslash f(X\backslash (\bigcup_{i \in F}
R_i \cup \bigcup\mathscr{F}))}
$$
is a singleton. Moreover, if $\mathscr{G} \subseteq \mathscr{V}_G$
has cardinality $k$ and
$$
y \in \textstyle{Y\backslash f(X\backslash (\bigcup_{i \in F} R_i
\cup \bigcup\mathscr{F}))}
$$
is non-empty, then $\mathscr{G} = \mathscr{F}$. From this, it
follows immediately that $Y$ is Gruenhage.

To prove this claim, we first construct a pair of decreasing
sequences of compact sets. Set $A_0 = f^{-1}(y)$ and $B_0 =
f^{-1}(z)$. Given $r \geq 0$, if, for all $n$, it is true that $(A_r
\cup B_r) \cap R_n = \varnothing$ or $A_r \cap B_r \subseteq R_n$,
then we stop. If not then let $n_{r+1}$ be minimal, subject to the
requirement that $(A_r \cup B_r) \cap R_{n_{r+1}} \neq \varnothing$
and $(A_r \cup B_r) \backslash R_{n_{r+1}} \neq \varnothing$. Put
$A_{r+1} = A_r\backslash R_{n_{r+1}}$ and $B_r\backslash
R_{n_{r+1}}$. Continuing in this way, either we stop at a finite
stage or continue indefinitely.

If the process stops at a finite stage $r \geq 0$, set $A = A_r$ and
$B = B_r$. Evidently $(A \cup B) \cap R_n = \varnothing$ or $A \cup
B \subseteq R_n$ for all $n$. If the process above continues
indefinitely, then we obtain a sequence $n_1 < n_2 < \ldots$ and
decreasing sequences $(A_i)$, $(B_i)$ of non-empty, compact sets.
Put $A = \bigcap_{i=0}^\infty A_i$ and $B = \bigcap_{i=0}^\infty
B_i$. Then, given any $n$, again we have $(A \cup B) \cap R_n =
\varnothing$ or $A \cup B \subseteq R_n$, lest we violate the
minimality of the first $n_i > n$.

If $A = \varnothing$ then by surjectivity, and compactness if
necessary, there is some $r \geq 1$ such that $A_r = \varnothing$.
Since $(A_{r-1} \cup B_{r-1})\backslash R_{n_r} \neq \varnothing$ by
construction, it is not the case that $B_r$ is empty, thus
$f^{-1}(y) \subseteq \bigcup_{i=1}^r R_{n_i}$ and $f^{-1}(z)
\not\subseteq \bigcup_{i=1}^r R_{n_i}$. Put $F = \{\oneton{n}{r}\}$
and let $G$ be arbitrary. Then $Y\backslash f(X\backslash \bigcup_{i
\in F} R_i)$ is the only element of $\mathscr{V}_{F,G,0}$ and
$$
\{y,z\} \cap \textstyle{Y\backslash f(X\backslash \bigcup_{i \in F}
R_i)} \;=\; \{y\}.
$$
If $B = \varnothing$ then we proceed similarly.

Now suppose that $A \neq \varnothing$ and $B \neq \varnothing$.
Define $K = A \cup B$ and let
$$
I \;=\; \textstyle{\setcomp{n \in \nat}{K \cap R_n =
\varnothing\mbox{ and }K \subseteq \bigcup\mathscr{U}_n}}.
$$
We have $K = \bigcup\setcomp{K \cap U}{U \in \mathscr{U}_n}$
whenever $n \in I$. Moreover, the sets in each $\setcomp{K \cap U}{U
\in \mathscr{U}_n}$, $n \in I$, are pairwise disjoint. In fact
slightly more can be said; if $x \in K \cap U \cap V$ for $U,V \in
\mathscr{U}_n$ and $n \in I$ then $U = V$. Indeed, if $x \in K \cap
U \cap V$, $U,V \in \mathscr{U}_n$ and $U \neq V$ then $x \in R_n$,
so $n \not\in I$.

Given distinct $a,b \in K$, there exists $n$ and $U \in
\mathscr{U}_n$ such that $\{a,b\} \cap U$ is a singleton. Firstly,
this means $n \in I$. Indeed, if $K \cap R_n \neq \varnothing$ then
$K \subseteq R_n$, meaning $a,b \in U$, which is not the case. Now
suppose $K \not\subseteq \bigcup\mathscr{U}_n$. We have
$\bigcup\mathscr{U}_n = R_m$ for some $m$, so $K \cap
\bigcup\mathscr{U}_n = K \cap R_m$ is empty, which again is not the
case. Thus $n \in I$. In particular, this means we can assume that
$\{a,b\} \cap U = \{a\}$ because $\setcomp{K \cap V}{V \in
\mathscr{U}_n}$ partitions $K$. By compactness, it follows that
there is finite $G \subseteq I$ and finite $\mathscr{E} \subseteq
\bigcup_{i \in G} \mathscr{U}_i$ such that
$$
A \;\subseteq\; \bigcup\setcomp{K \cap U}{U \in
\mathscr{E}}\quad\mbox{and}\quad B \;\not\subseteq\;
\bigcup\setcomp{K \cap U}{U \in \mathscr{E}}.
$$
Let $x \in K \cap U$, where $U \in \mathscr{E}$. For every $i \in
G$, we know from above that there is a unique $U_i \in
\mathscr{U}_i$ such that $x \in U_i$. By definition $\bigcap_{i \in
G} U_i \in \mathscr{V}_G$, and since $U \in \mathscr{U}_j$ for some
$j \in G$, we have $U = U_j$ and $x \in \bigcap_{i \in G} U_i
\subseteq U$. This allows us to take a finite subset $\mathscr{F}
\subseteq \mathscr{V}_G$, such that
$$
A \;\subseteq\; \bigcup\setcomp{K \cap V}{V \in
\mathscr{F}}\quad\mbox{and}\quad B \;\not\subseteq\;
\bigcup\setcomp{K \cap V}{V \in \mathscr{F}}.
$$
We choose $\mathscr{F}$ so that it has minimal cardinality $k$.

If necessary, we appeal to compactness to find $r \geq 0$ satisfying
$$
f^{-1}(y) \;\subseteq\; \bigcup_{i=1}^r R_{n_i} \;\cup\;
\bigcup\mathscr{F},
$$
$A \subseteq f^{-1}(y)\backslash \bigcup_{i=1}^r R_{n_i}$ and $B
\subseteq f^{-1}(z)\backslash \bigcup_{i=1}^r R_{n_i}$. Let $F =
\{\oneton{n}{r}\}$. Observe that if $\mathscr{G} \subseteq
\mathscr{V}_G$ and $f^{-1}(y) \subseteq \bigcup_{i \in F} R_i \cup
\bigcup\mathscr{G}$ then $A \subseteq \bigcup\mathscr{G}$, and
likewise for $f^{-1}(z)$ and $B$. Thus $f^{-1}(z) \not\subseteq
\bigcup_{i \in F} R_i \cup \bigcup\mathscr{F}$ and consequently
$$
\{y,z\} \cap \textstyle{Y\backslash f(X\backslash (\bigcup_{i \in F}
R_i \cup \bigcup\mathscr{F}))} \;=\; \{y\}.
$$
Now let $y \in Y\backslash f(X\backslash (\bigcup_{i \in F} R_i \cup
\bigcup\mathscr{G}))$, where $\mathscr{G} \subseteq \mathscr{V}_G$
has cardinality $k$. It follows that $A \subseteq
\bigcup\mathscr{G}$. We show that $\mathscr{G} = \mathscr{F}$. Take
$W \in \mathscr{F}$. By minimality of $k$
$$
A \;\not\subseteq\; \bigcup\setcomp{K \cap V}{V \in
\mathscr{F}\backslash\{W\}}
$$
thus there is $x \in A \cap W$. Take $V \in \mathscr{G}$ such that
$x \in V$. We claim that $W = V$. Indeed, $W = \bigcap_{i \in G}
W_i$ and $V = \bigcap_{i \in G} V_i$ for some $W_i,V_i \in
\mathscr{U}_i$, $i \in G$. Since $G \subseteq I$ and $x \in K \cap
W_i \cap V_i$, we have $W_i = V_i$ for all $i \in G$, hence $W = V
\in \mathscr{G}$. Therefore $\mathscr{F} \subseteq \mathscr{G}$ and,
by cardinality, we have equality as required.
\end{proof}

Next, something of a more functional analytic nature.

\begin{prop}
\label{gimpliesballg} If $K$ is a Gruenhage compact then so is
$\ball{\dual{\Ck{K}}}$.
\end{prop}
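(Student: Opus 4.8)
The plan is to construct explicit families of \weakstar-open subsets of $\ball{\dual{\Ck{K}}}$ directly from a legitimate system on $K$, in the same spirit as the passage from property G to a Gruenhage structure carried out in Proposition \ref{gimpliesdualgru}. So I would fix a legitimate system $(A_n,\mathscr{H}_n)$ with union $\mathscr{H}$ separating points and satisfying properties (1)--(3), so that the subalgebra $N$ of Lemma \ref{norming} is $1$-norming. Two features of this system drive the argument. First, since $N$ is norming, distinct $\mu,\nu \in \ball{\dual{\Ck{K}}}$ must satisfy $\mu(H)\neq\nu(H)$ for some $H\in\mathscr{H}$ (viewing $H$, which is universally Radon measurable, as an indicator in $\bidual{\Ck{K}}$); this provides point separation. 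Second, for each $n$ the elements of $\mathscr{H}_n$ are pairwise disjoint, being relatively open in $A_n$, so $\sum_{H\in\mathscr{H}_n}|\mu|(H)\leq|\mu|(A_n)\leq 1$ for every $\mu$ in the ball. Hence for any threshold $\alpha>0$ only finitely many $H\in\mathscr{H}_n$ satisfy $|\mu|(H)>\alpha$, and this uniform mass bound is what should furnish the point-finiteness demanded by Definition \ref{gruenhage}.

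Next I would realise the separation by \weakstar-open sets. The natural functionals are $\mu\mapsto\mu_+(O)$ and $\mu\mapsto\mu_-(O)$ for open $O\subseteq K$: each equals a supremum $\sup\{\pm\mu(f): f\in\Ck{K},\ 0\leq f\leq 1,\ \supp f\subseteq O\}$ of \weakstar-continuous functions and is therefore \weakstar-lower semicontinuous, so the sets $\{\mu:\mu_\pm(O)>q\}$, $q\in\rat$, are \weakstar-open and, crucially, detect sign. For each $H\in\mathscr{H}_n$ I would write $H=A_n\cap O_H$ with $O_H$ open, choose by inner regularity a compact $C\subseteq H$, and attach to these data a \weakstar-open set built from the functionals above and from Urysohn functions supported near $C$, organising the whole collection into countably many families indexed by $n$, by a rational scale $q$, and by an approximation parameter $m\in\nat$. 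I would then show that for distinct $\mu,\nu$ an $H$ with $\mu(H)\neq\nu(H)$ yields, at an appropriate scale, a set in one of these families whose trace on $\{\mu,\nu\}$ is a singleton, while the mass bound keeps the family point-finite at whichever of $\mu,\nu$ carries the relevant mass on $A_n$. Here I would exploit the asymmetry in Definition \ref{gruenhage}, which requires point-finiteness at only one of the two points.

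The main obstacle is topological and lies precisely in the fact that the separating sets $H$ are only \emph{relatively} open in the closed, possibly nowhere dense, sets $A_n$. Consequently neither $\mu\mapsto\mu(H)$ nor $\mu\mapsto|\mu|(H)$ is \weakstar-semicontinuous, and a continuous test function cannot be localised inside $A_n$; the clean point-finiteness inequality $\sum_H|\mu|(H)\leq 1$ lives on the sets $H=O_H\setminus(K\backslash A_n)$, whereas the \weakstar-open machinery naturally sees only the open supersets $O_H$ and the common open remainder $K\backslash A_n$. Reconciling these is delicate, because the obvious device --- subtracting the contribution of $K\backslash A_n$, i.e.\ passing from $\mu_\pm(O_H)$ to $\mu_\pm(H)$ --- replaces lower semicontinuous functions by differences of such, which need not define open sets. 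My plan to overcome this is to interpose compact inner approximations $C\subseteq H$ (so that $\mu\mapsto\mu_\pm(C)$ becomes \emph{upper} semicontinuous and complementary sub-level sets become available), to handle the mass on $K\backslash A_n$ separately using the auxiliary element $K\backslash A_n\in\mathscr{H}$ guaranteed by property (2), and to absorb the approximation error by letting $m\to\infty$ along the countable index. The truly delicate bookkeeping is to arrange that the finitely many approximating functions needed to separate a given pair are drawn from families fixed \emph{in advance} rather than tailored to the pair, so that the uniform mass bound, and not a pair-dependent estimate, is what delivers point-finiteness.
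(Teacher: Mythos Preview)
Your diagnosis of the obstacle is accurate, but you have overlooked its one-line resolution and in doing so committed yourself to an approximation scheme whose bookkeeping you yourself flag as unresolved. For every $H\in\mathscr{H}_n$ the set $H\cup(K\backslash A_n)$ is \emph{open in $K$}: this is exactly the passage from part~(2) to part~(3) of Proposition~\ref{equivalences}, where $\mathscr{U}_n=\{H\cup R_n:H\in\mathscr{H}_n\}$ with $R_n=K\backslash A_n$. Consequently $\mu\mapsto\mu_+(H\cup(K\backslash A_n))$ is already \weakstar-lower semicontinuous, and the sets
\[
U^{(H,n,q)}_+ \;=\; \setcomp{\mu\in\ball{\dual{\Ck{K}}}}{\mu_+(H\cup(K\backslash A_n))>q},\qquad q\in(0,1)\cap\rat,
\]
together with their negative-part analogues, are \weakstar-open with no approximation whatsoever. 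These are precisely the families the paper uses. You wrote down $O_H$ and $K\backslash A_n$ side by side and then rejected the ``obvious device'' of subtracting; the point is not to subtract but to take the \emph{union}, so that the common piece $K\backslash A_n$ is absorbed into every member of the family and the open sets are fixed once and for all.

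With this in hand the argument is short. For distinct $\mu,\nu$ reduce to $\mu_+\neq\nu_+$ (else treat $\mu_-,\nu_-$), use Lemma~\ref{norming} to find $n$ and $H\in\mathscr{H}_n$ (allowing $H=\varnothing$, which one adjoins to each $\mathscr{H}_n$) with $\mu_+(H\cup(K\backslash A_n))<q<\nu_+(H\cup(K\backslash A_n))$, so that $\{\mu,\nu\}\cap U^{(H,n,q)}_+=\{\nu\}$. Point-finiteness at $\mu$ is immediate: if $\mu\in U^{(H',n,q)}_+$ then $\mu_+(H')>q-\mu_+(K\backslash A_n)\geq q-\mu_+(H\cup(K\backslash A_n))>0$, and since the $H'\in\mathscr{H}_n$ are pairwise disjoint this happens for only finitely many $H'$. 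Your compact-approximation programme, with its extra index $m$, is thus unnecessary; as stated it is also incomplete, since you give no mechanism for selecting the compacta $C\subseteq H$ independently of the pair $(\mu,\nu)$, and without that the resulting families need not be countable.
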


\begin{proof}
Let $(A_n,\mathscr{H}_n)$ be a legitimate system satisfying
properties (1) -- (3), presented after Corollary
\ref{desciptivegru}. We can and do assume that $\varnothing \in
\mathscr{H}_n$ for all $n$. Given $H \in \mathscr{H}_n$ and $q \in
(0,1) \cap \rat$, define the $\weakstar$-open set
$$
U^{(H,n,q)}_+ \;=\; \setcomp{\mu \in \ball{\dual{\Ck{K}}}}{\mu_+(H
\cup (K\backslash A_n)) > q}
$$
and let $\mathscr{U}^{(n,q)}_+ = \setcomp{U^{(H,n,q)}_+}{H \in
\mathscr{H}_n}$. Define $U^{(H,n,q)}_-$ and $\mathscr{U}^{(n,q)}_-$
in the corresponding manner. We claim that, with respect to
$\mathscr{U}^{(n,q)}_+$ and $\mathscr{U}^{(n,q)}_-$, $n \in \nat$
and $q \in (0,1) \cap \rat$, $\ball{\dual{\Ck{K}}}$ is a Gruenhage
compact in the sense of Definition \ref{gruenhage}.

Let $\mu,\nu \in \ball{\dual{\Ck{K}}}$ be distinct. Either $\mu_+
\neq \nu_+$ or $\mu_- \neq \nu_-$. We suppose that the former holds;
if the latter holds then we repeat the argument below using the sets
$U^{(H,n,q)}_-$ and $\mathscr{U}^{(n,q)}_-$. By Lemma \ref{norming},
there exists $n \in \nat$ and $H_0 \in \mathscr{H}_n$ such that
$\mu_+(H_0) \neq \nu_+(H_0)$. If $\mu_+(K\backslash A_n) \neq
\nu_+(K\backslash A_n)$ then set $H = \varnothing$. Otherwise, set
$H = H_0$. Either way, we have $\mu_+(H \cup (K\backslash A_n)) \neq
\nu_+(H \cup (K\backslash A_n))$ and, without loss of generality, we
suppose that $\mu_+(H \cup (K\backslash A_n)) < q < \nu_+(H \cup
(K\backslash A_n))$ for some rational $q$. Then $\{\mu,\nu\} \cap
U^{(H,n,q)}_+ = \{\nu\}$. Moreover, if $\mu \in
U^{(H^\prime,n,q)}_+$ for some $H^\prime \in \mathscr{H}_n$ then
$\mu_+(H^\prime) = \mu_+(H^\prime \cup (K\backslash A_n)) -
\mu_+(K\backslash A_n) > q - \mu_+(H \cup (K\backslash A_n)) > 0$.
Hence, as each $\mathscr{H}_n$ is a family of pairwise disjoint
sets, $\mu$ can only be in finitely many elements of
$\mathscr{U}^{(n,q)}_+$.
\end{proof}

We finish by using these two results to glean a further crop of
stability properties.

\begin{prop}
$\;$
\begin{enumerate}
\item If $K$ is a Gruenhage compact and $\mapping{\pi}{K}{M}$ is
continuous and surjective then $M$ is also Gruenhage;
\item if $X_n$, $n \in \nat$ are Gruenhage spaces then so is $\prod_n X_n$;
\item if $X$ is a Banach space, $F \subseteq \dual{X}$ is a separating subspace
and $K \subseteq X$ is a Gruenhage compact in the
$\sigma(X,F)$-topology then so is its symmetric,
$\sigma(X,F)$-closed convex hull.
\end{enumerate}
\end{prop}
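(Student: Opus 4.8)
The plan is to obtain all three parts from Theorem~\ref{triumph} and Proposition~\ref{gimpliesballg}, together with the elementary observation that the Gruenhage property is hereditary: if families $(\mathscr{U}_n)$ witness Definition~\ref{gruenhage} for a space $X$ and $Y \subseteq X$, then the traces $\setcomp{U \cap Y}{U \in \mathscr{U}_n}$ witness it for $Y$, since a point of $Y$ lies in $U \cap Y$ exactly when it lies in $U$ (so both the singleton-trace and point-finiteness conditions survive). For (1), I would first note that a continuous surjection $\pi$ from a compact space $K$ onto a Hausdorff space $M$ is automatically perfect: each fibre $\pi^{-1}(m)$ is a closed, hence compact, subset of $K$, and $\pi$ is closed because it sends closed subsets of $K$ to compact, therefore closed, subsets of $M$. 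Theorem~\ref{triumph} then yields that $M$ is Gruenhage.

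For (2), I would pull the witnessing families back through the coordinate projections $\mapping{p_k}{\prod_n X_n}{X_k}$. If each $X_k$ carries families $(\mathscr{U}^k_m)_{m \in \nat}$ as in Definition~\ref{gruenhage}, I set $\mathscr{W}_{k,m} = \setcomp{p_k^{-1}(U)}{U \in \mathscr{U}^k_m}$; these consist of open sets, and there are only countably many such families. Given distinct $x = (x_n)$ and $y = (y_n)$, I would choose a coordinate $k$ with $x_k \neq y_k$ and then $m$ and $U \in \mathscr{U}^k_m$ separating $x_k$ and $y_k$ as in the definition. Because $x \in p_k^{-1}(U')$ if and only if $x_k \in U'$, both condition~(1) and the point-finiteness condition~(2) of Definition~\ref{gruenhage} transfer verbatim from the $k$-th factor to the product.

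Part~(3) is the substantial case, and I would route it through the space of measures. Since $F$ separates points, evaluation embeds $(X,\sig{X}{F})$ into $(\dual F,\weakstar)$, and under this embedding $K$ is $\weakstar$-compact. The integration operator $\mapping{S}{\dual{\Ck K}}{\dual F}$ given by $(S\mu)(f) = \lint{K}{}{f}{\mu}$ is the adjoint of the restriction $f \mapsto f\restrict{K}$, so it is $\weakstar$-to-$\weakstar$ continuous; in particular it is continuous on the $\weakstar$-compact ball $\ball{\dual{\Ck K}}$. By Proposition~\ref{gimpliesballg} this ball is a Gruenhage compact, so by part~(1) its image $\tilde C = S(\ball{\dual{\Ck K}})$ is a Gruenhage compact. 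As the extreme points of $\ball{\dual{\Ck K}}$ are the signed point masses $\pm\delta_t$ ($t \in K$), which $S$ sends to $\pm t$, linearity and $\weakstar$-continuity of $S$ identify $\tilde C$ with the symmetric $\weakstar$-closed convex hull of $K$ in $\dual F$.

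It then remains to descend from $\tilde C \subseteq \dual F$ to the symmetric $\sig{X}{F}$-closed convex hull $C$ taken inside $X$. As $\sig{X}{F}$ is the subspace topology, $C = \tilde C \cap X$, so $C$ is a subspace of the Gruenhage compact $\tilde C$ and is Gruenhage by the heredity noted above. The one genuine difficulty is the compactness of $C$: since $\conv(K \cup (-K))$ is $\weakstar$-dense in $\tilde C$, one has $C = \tilde C$ --- and hence $C$ is $\sig{X}{F}$-compact --- exactly when every barycenter $S\mu$ already lies in $X$. I would therefore finish by verifying that $S$ maps $\ball{\dual{\Ck K}}$ into $X$, equivalently that the $\weakstar$-closed convex hull of the $\sig{X}{F}$-compact set $K$ does not escape $X$; this is the step where the Banach-space hypothesis is essential, and it is the part I expect to demand the most care.
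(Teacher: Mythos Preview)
Your arguments for (1) and (2) are correct and match the paper's proof exactly. For (3), your route through Proposition~\ref{gimpliesballg}, part~(1), and hereditariness is also the paper's: the paper simply observes that $S\ball{\dual{\Ck K}}$ is a Gruenhage compact in $(\dual F,\weakstar)$ and declares that this ``gives (3)'', leaving the containment $C \subseteq \tilde C$ and the hereditariness step implicit. At the end of your penultimate paragraph you have already reproduced the paper's proof.

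Your final paragraph, however, is a wrong turn. The paper does not attempt to show that $S$ maps $\ball{\dual{\Ck K}}$ into $X$, and in fact this fails in general. Take $X = \czero$, let $F \subseteq \lp{1}$ be the finitely supported sequences, and set $K = \{n e_n : n \geq 1\} \cup \{0\}$. Then $K$ is $\sig{X}{F}$-compact, and the vectors $a^{(N)} = \sum_{k=1}^N e_{2^k}$ lie in $\conv(K \cup (-K))$ (write $a^{(N)} = \sum_{k=1}^N 2^{-k}(2^k e_{2^k})$ and note $\sum_{k=1}^N 2^{-k} < 1$), yet $a^{(N)}$ converges pointwise to a vector outside $\czero$. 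Thus $C$ need not equal $\tilde C$ and need not be $\sig{X}{F}$-compact; the word ``compact'' in the conclusion of (3) should be read loosely. What the paper's proof actually delivers --- and what your argument already establishes --- is that $C$ is Gruenhage, by hereditariness from $\tilde C$. Drop the last paragraph and you are done.
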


\begin{proof}
(1) follows immediately from Theorem \ref{triumph}. To prove (2), we
let $X_n$ have a sequence $(\mathscr{U}_{n,m})_{m \in \nat}$ of
families of open sets satisfying Definition \ref{gruenhage}. It is
straightforward to verify that the families $(\mathscr{V}_{n,m})$,
defined by
$$
\mathscr{V}_{n,m} \;=\; \setcomp{\textstyle{\prod_{i < n}X_i \times
U \times \prod_{i > n} X_i}}{U \in \mathscr{U}_{n,m}}
$$
are witness to the fact that $\prod_{n \in \nat}X_n$ is Gruenhage.
To see that (3) holds, consider, as in Proposition
\ref{coarsegruenhage}, $K$ as a subset of $\dual{F}$ and the map $S$
restricted to $\ball{\dual{\Ck{K}}}$, which is Gruenhage by
Proposition \ref{gimpliesballg}. By (1), $S\ball{\dual{\Ck{K}}}
\subseteq \dual{F}$ is a Gruenhage compact in the
$\weakstar$-topology, giving (3).
\end{proof}

\bibliographystyle{amsplain}

\end{document}